\newcommand{\rt}{\rightarrow}
\newcommand{\lrt}{\longrightarrow}
\def\spec{\operatorname{\mathsf{Spec}}}
\newcommand{\Z}{\mathbb{Z} }
\newcommand{\C}{\mathcal{C} }
\newcommand{\m}{\mathfrak{m}}
\newcommand{\CI}{\mathcal{I} }
\newcommand{\CT}{\mathcal{T} }
\newcommand{\Y}{\mathcal{Y} }
\newcommand{\p}{\mathfrak{p} }
\newcommand{\Hom}{{\rm{Hom}}}
\newcommand{\End}{{\rm{End}}}
\newcommand{\depth}{\operatorname{\mathsf{depth}}}
\def\nf{\operatorname{\mathsf{NF}}}
\def\sing{\operatorname{\mathsf{Sing}}}
\def\K{\operatorname{\mathsf{K}}}
\newcommand{\Ext}{{\rm{Ext}}}
\newcommand{\ca} {\mathsf{ca}}
\def\db{\operatorname{\mathsf{D^b}}}
\def\thick{\operatorname{\mathsf{thick}}}
\def\fl{\operatorname{\mathsf{fl}}}
\def\radius{\operatorname{\mathsf{radius}}}
\def\ds{\operatorname{\mathsf{D_{sg}}}}
\def\Hom{\operatorname{\mathsf{Hom}}}
\def\lcm{\operatorname{\mathsf{\underline{CM}}}}
\def\cm{\operatorname{\mathsf{CM}}}
\def\xx{\text{\boldmath $x$}}
\def\yy{\text{\boldmath $y$}}
\def\Ext{\operatorname{\mathsf{Ext}}}
\def\H{\operatorname{\mathsf{H}}}
\def\X{\mathcal{X}}
\def\ext{\operatorname{\mathsf{ext}}}
\def\add{\operatorname{\mathsf{add}}}
\def\mod{\operatorname{\mathsf{mod}}}
\def\syz{\mathsf{\Omega}}
\def\supp{\operatorname{\mathsf{Supp}}}
\def\v{\operatorname{\mathsf{V}}}
\def\q{\mathfrak q}
\def\ann{\operatorname{\mathsf{ann}}}
\newcommand{\der}{{\rm{der}}}
\newtheorem{theorem}{Theorem}[section]
\newtheorem{cor}[theorem]{Corollary}
\newtheorem{lem}[theorem]{Lemma}
\newtheorem{prop}[theorem]{Proposition}
\theoremstyle{definition}
\newtheorem{dfn}[theorem]{Definition}
\newtheorem{conv}[theorem]{Convention}
\newtheorem{rem}[theorem]{Remark}
\theoremstyle{plain}
\theoremstyle{definition}
\theoremstyle{remark}
\newtheorem*{ac}{Acknowledgments}
\numberwithin{equation}{section}
\begin{document}
\baselineskip=14.96pt

\title[Annihilation of cohomology, generation of modules and derived dimension]{Annihilation of cohomology, generation of modules and finiteness of derived dimension}

\author[Bahlekeh, Hakimian, Salarian, Takahashi]{Abdolnaser Bahlekeh, Ehsan Hakimian, Shokrollah Salarian, Ryo Takahashi}

\address{Department of Mathematics, Gonbad Kavous University, Postal Code:4971799151, Gonbad-e-Kavous, Iran and School of Mathematics, Institute for Research in Fundamental Science (IPM), P.O.Box: 19395-5746, Tehran, Iran}
\email{n.bahlekeh@gmail.com}

\address{Department of Mathematics, University of Isfahan, P.O.Box: 81746-73441, Isfahan,
 Iran}\email{hakimian@sci.ui.ac.ir}

\address{Department of Mathematics, University of Isfahan, P.O.Box: 81746-73441, Isfahan,
 Iran and School of Mathematics, Institute for Research in Fundamental Science (IPM), P.O.Box: 19395-5746, Tehran, Iran}
 \email{Salarian@ipm.ir}

\address{Graduate School of Mathematics, Nagoya University, Furocho, Chikusaku, Nagoya 464-8602, Japan}
\email{takahashi@math.nagoya-u.ac.jp}
\urladdr{http://www.math.nagoya-u.ac.jp/~takahashi/}

\subjclass[2010]{13C60, 13D09, 16G60, 18E30}

\keywords{cohomology annihilator, derived category, singularity category, singular locus, isolated singularity, finite/countable Cohen--Macaulay representation type}

\thanks{The first and third authors were partly supported by grants from IPM (No. 93130055 and 93130218). The fourth author was partly supported by JSPS Grant-in-Aid for Scientific Research (C) 25400038}

\begin{abstract}
Let $(R,\m,k)$ be a commutative noetherian local ring of Krull dimension $d$.
We prove that the cohomology annihilator $\ca(R)$ of $R$ is $\m$-primary if and only if for some $n\ge0$ the $n$-th syzygies in $\mod R$ are constructed from syzygies of $k$ by taking direct sums/summands and a fixed number of extensions.
These conditions yield that $R$ is an isolated singularity such that the bounded derived category $\db(R)$ and the singularity category $\ds(R)$ have finite dimension, and the converse holds when $R$ is Gorenstein.
We also show that the modules locally free on the punctured spectrum are constructed from syzygies of finite length modules by taking direct sums/summands and $d$ extensions.
This result is exploited to investigate several ascent and descent problems between $R$ and its completion $\widehat R$.
\end{abstract}
\maketitle

\section{Introduction}
Let $R$ be a commutative noetherian local ring with maximal ideal $\m$, and let $\mod R$ denote the category of all finitely generated $R$-modules.
For any non-negative integer $n$, the elements of $R$ annihilating $\Ext_R^n(M, N)$, for all $M$ and $N$ in $\mod R$, form an ideal which, following \cite{ua}, we denote by $\ca^n(R)$.
It is easy to see that there is a tower of ideals
$$
\cdots\subseteq\ca^n(R)\subseteq\ca^{n+1}(R)\subseteq\cdots,
$$
so their union $\ca(R)$ is also an ideal of $R$, which is said to be {\em cohomology annihilator} of $R$.
As $R$ is noetherian, there exists an integer $s$ such that $\ca(R)=\ca^s(R)$.
Unless $R$ is regular, $\ca(R)$ is a proper ideal.

The notion of the cohomology annihilator was introduced and studied independently by Dieterich \cite{di} and Yoshino \cite{y} in connection with the Brauer--Thrall conjectures for maximal Cohen--Macaulay modules, where they proved that the cohomology annihilator of a $d$-dimensional Cohen-Macaulay complete local ring with perfect coefficient field is $\m$-primary, provided that $R$ is an isolated singularity.
Later on, Popescu and Roczen \cite{pr} removed the assumption being an isolated singularity from the result of Dieterich and Yoshino.
A nice theorem of Auslander \cite{A} states that every complete Cohen--Macaulay local ring of finite Cohen--Macaulay representation type is an isolated singularity.
In fact, he essentially proved that in this case, the cohomology annihilator $\ca(R)$ is $\m$-primary.
This result was extended by Leuschke and Wiegand \cite{LW} to the case where the ring is excellent, and by Huneke and Leuschke \cite{HL} to all Cohen--Macaulay local rings.
Recently, Iyengar and Takahashi \cite{ua} considered the cohomology annihilator of a noetherian ring that is finitely generated as a module over its center, that is, a noether algebra.

On the other hand, the notion of dimension for triangulated categories was introduced by Bondal--Van den Bergh and Rouquier in \cite{BV, R} and analogues for abelian categories by Dao--Takahashi \cite{radius, dim}.
These essentially indicate the number of {\em extensions} necessary to build all objects out of a single object.
Using the dimension of a bounded derived category, Rouquier presented the first example of an artin algebra of representation dimension greater than three \cite{R1}.
Since then, various studies concerning the finiteness of the dimension of a bounded derived category have been made; see Remark \ref{hist}.
Iyengar and Takahashi \cite{ua} investigated the relationship between the existence of non-trivial cohomology annihilators and the existence of strong generators for the category of finitely generated modules and its derived categories.

The main theme of this paper is to study cohomology annihilators of commutative noetherian rings and investigate their connections with the dimensions of a bounded derived category and a singularity category, as well as the radius of a subcategory of finitely generated modules.
It is shown that having a non-trivial cohomology annihilator guarantees the existence of a non-negative integer $n$ such that a given $R$-module $M$ is built out of the syzygies of an $R/{\ca(R)}$-module by taking $n$-extensions, up to finite direct sums and direct summands.
It also turns out that the subcategory of $\mod R$ consisting of modules whose nonfree loci are those prime ideals containing $\ca(R)$, is an extension closure of syzygies of $R/{\p}$, where $\p$ runs over the prime ideals containing $\ca(R)$.
Indeed, we shall prove a more general result in Theorem \ref{mthm}, which is the main result of this paper and all of the other results given in this paper are deduced from this.
As the first application of this result, we show that $\ca(R)$ is $\m$-primary if and only if for some integer $n\ge 0$, the subcategory consisting of $n$-th syzygies, $\syz^n(\mod R)$, is built out of syzygies of the residue field of $R$ in a finite number of extensions, direct sums and direct summands.
Moreover, these statements imply that the dimensions of the bounded derived category and the singularity category are finite.
Surprisingly, it is shown all of these statements are equivalent, assuming $R$ is Gorenstein; see Theorem \ref{5cond}.
It is well-known that the singular locus of $R$ is contained in the defining closed subset of the cohomology annihilator of $R$, $V(\ca(R))$.
In Theorem \ref{sing}, we investigate when this containment becomes an equality and when the cohomology annihilator of $R$ is non-trivial.

Another application of Theorem \ref{mthm} leads us to showing that the subcategory of $R$-modules that are locally free on the punctured spectrum of $R$ are built out of syzygies of modules of finite length by taking $d$ {\em extensions} in $\mod R$ up to finite direct sums and direct summands, where $d$ is the Krull dimension of $R$, and consequently, this subcategory is contained in the extension closure of syzygies of the residue field $k$; see Theorem \ref{tdthm}.
So, this result removes the assumption that $R$ is Cohen--Macaulay from \cite[Theorem 2.4]{stcm}.
Also, it is shown that, $\ca(R)$ is $\m$-primary if and only if the cohomology annihilator the ($\m$-adic) completion $\widehat R$ of $R$, $\ca(\widehat{R})$, is $\widehat{\m}$-primary, provided that $\widehat{R}$ is an isolated singularity; see Theorem \ref{cahat}.
The importance of this result comes from the fact that it has long been known that many nice properties of local rings need not be inherited by their completions and this has always regarded as a theoretical limitation.
It is also proved that finiteness and countability of the set of isomorphism classes of indecomposable maximal Cohen--Macaulay modules ascends and descends between a Henselian local ring $R$ and its completion, whenever $\widehat{R}$ is an isolated singularity; see Corollary \ref{fcmth}.
So, in this case Schreyer's conjecture \cite[Conjecture 7.3]{S} holds true.
The proof of this result enables us to show that, under the same assumption,
$R$ satisfies the Auslander--Reiten conjecture if and only its completion does so.

\section{Basic definitions}

This section is devoted to stating the definitions and basic properties
of notions which we will freely use in the later sections.
Let us start with our convention.
\begin{conv}
Throughout the paper, let $R$ be a commutative noetherian ring with identity.
We assume that all modules are finitely generated and that all subcategories are full and strict (i.e., closed under isomorphism).
\end{conv}

\begin{dfn}
(1) We denote by $\mod R$ the category of (finitely generated) $R$-modules.\\
(2) The {\em singular locus} of $R$, denoted by $\sing R$, is by definition the set of prime ideals $\p$ of $R$ such that $R_{\p}$ is not a regular local ring.\\
(3) A local ring $(R,\m)$ is called an {\em isolated singularity} if $R_\p$ is regular for all nonmaximal prime ideals $\p$, that is, $\sing R\subseteq \{\m\}$.
\end{dfn}

\begin{dfn}
Let $M$ be an $R$-module.\\
(1) The {\em nonfree locus} $\nf(M)$ of $M$ is defined as the set of prime ideals $\p$ of $R$ such that $M_\p$ is a nonfree $R_\p$-module.
It is well-known (and easy to see) that $\nf(M)$ is a closed subset of $\spec R$ in the Zariski topology.\\
(2) We say that $M$ is {\em locally free on the punctured spectrum of $R$} if $M_\p$ is a free $R_\p$-module for all nonmaximal prime ideals $\p$, namely, $\nf(M)\subseteq\{\m\}$.\\
(3) Let $\xx$ be a sequence of elements of $R$.
Then $\K(\xx, M)$ denotes the {\em Koszul complex} of $M$ with respect to $\xx$.
For each integer $i$ the $i$-th homology $\H_i(\xx, M):=\H_i(\K(\xx, M))$ is called the {\em $i$-th Koszul homology} of $M$ with respect to $\xx$.
The direct sum $\H(\xx, M):=\bigoplus_{i\in\Z}\H_i(\xx, M)$ is called the {\em Koszul homology} of $M$ with respect to $\xx$.
\end{dfn}

\begin{dfn}
Let $\X$ be a subcategory of $\mod R$.\\
(1) We say that $\X$ is a {\em resolving} subcategory of $\mod R$ if it contains projective modules and closed under direct summands, extensions and kernels of epimorphisms.
This notion has been introduced by Auslander and Bridger \cite{ab}.\\
(2) $\X$ is said to be a {\em Serre} subcategory if it is closed under submodules, quotient modules and extensions.
This is equivalent to saying that for each short exact sequence $0\to L\to M\to N\to 0$ in $\mod R$ the module $M$ is in $\X$ if and only if $L,N$ are in $\X$.\\
(3) We say that $\X$ is a {\em thick} subcategory of $\mod R$ if is closed under direct summands and short exact sequences.
The latter condition means that for each short exact sequence $0\to L\to M\to N\to 0$ in $\mod R$, if two of $L,M,N$ are in $\X$, then so is the third.\\
(4) For each integer $n\ge 0$, let $\syz^n\X$ denote the subcategory of $\mod R$ consisting of $n$-th syzygies of $R$-modules in $\X$, namely, those modules $M$ which admits an exact sequence $0 \to M \to P_{n-1} \to \cdots \to P_0 \to X \to 0$ with each $P_i$ free and $X\in\X$.\\
(5) The {\em additive closure} $\add\X$ (repsectively, {\em extension closure} $\ext\X$) of $\X$ is by definition the smallest subcategory of $\mod R$ containing $\X$ and closed under finite direct sums and direct summands (respectively, direct summands and extensions).
We denote by $\thick\X$ the {\em thick closure} of $\X$, namely, the smallest thick subcategory of $\mod R$ containing $\X$.\\
(6) Let $\CT$ be a triangulated category. A {\em thick subcategory} of $\CT$ is by definition a triangulated
subcategory of $\CT$ closed under direct summands. A {\em thick closure} of a subcategory $\Y$ of $\CT$, denoted $\thick\Y$,
is defined as a smallest thick subcategory of $\CT$ containing $\Y$. When $\Y$ consists of a single object $M$,
we denote it by $\thick M$.
\end{dfn}

\begin{rem}\label{rem}
If $n\ge1$, then the subcategory $\syz^n\X$ is closed under direct sums with free modules,
that is to say, if $M$ is an $R$-module in $\syz^n\X$, then so is $M\oplus F$ for all free $R$-modules $F$.
In fact, if there is an exact sequence $0 \to M \xrightarrow{f} P_{n-1} \xrightarrow{g} P_{n-2} \to \cdots \to P_0 \to X \to 0$ with each $P_i$ free and $X\in\X$, then the sequence $0 \to M\oplus F \xrightarrow{\left(\begin{smallmatrix}
f&0\\
0&1
\end{smallmatrix}\right)} P_{n-1}\oplus F \xrightarrow{\left(\begin{smallmatrix}
g&0
\end{smallmatrix}\right)} P_{n-2} \to \cdots \to P_0 \to X \to 0$ is exact.
\end{rem}

\begin{dfn}
Let $W$ be a subset of $\spec R$.\\
(1) The {\em dimension} of $W$, denoted $\dim W$, is defined as the supremum of $\dim R/{\p}$ where $\p$ runs through all prime ideals in $W$.
Hence $\dim W=-\infty$ if and only if $W$ is empty. \\
(2) We denote by $\supp^{-1}(W)$ (respectively, $\nf^{-1}(W)$) the subcategory of $\mod R$
consisting of $R$-modules whose supports (respectively, nonfree loci) are contained in $W$.
Note that $\supp^{-1}(W)$ (respectively, $\nf^{-1}(W)$) is a Serre (respectively, resolving) subcategory of $\mod R$.
\end{dfn}

\begin{dfn}
For subcategories $\X_1,\dots,\X_n$ of $\mod R$, we denote by $\bigoplus_{i=1}^n\X_i$
the subcategory of $\mod R$ consisting of modules of the form $\bigoplus_{i=1}^nX_i$ with $X_i\in\X_i$.
For a family ${\{\X_\lambda\}}_{\lambda\in\Lambda}$ of subcategories of $\mod R$ we
denote by $\bigcup_{\lambda\in\Lambda}\X_\lambda$ the subcategory of $\mod R$
consisting of modules $M$ such that $M\in\X_\lambda$ for some $\lambda\in\Lambda$.
\end{dfn}

\begin{dfn}\cite[Definition 3.2]{R}
Let $\CT$ be a triangulated category.\\
(1) Let $\CI,\CI_1,\CI_2$ be subcategories of $\CT$.
Let $\CI_1 \ast\CI_2$ denote the subcategory of $\CT$ consisting of objects $M$ such that there exists an exact triangle $I_1\to M\to I_2\to I_1[1]$ with $I_1\in\CI_1$ and $I_2\in\CI_2$.
Denote by $\langle\CI\rangle$ the smallest subcategory of $\CT$ contains $\CI$ and is closed under taking finite direct sums, direct summands and shifts.
Inductively one defines $\langle\CI\rangle_0=0$ and $\langle\CI\rangle_r=\langle\langle\CI\rangle_{r-1}\ast \langle\CI\rangle\rangle$ for $r\geq 1$.
For $\CI=\{M\}$ we simply denote it by $\langle M\rangle_r$.\\
(2) The dimension $\dim\CT$ of $\CT$ is defined as the infimum of integers $n\ge0$ such that $\CT=\langle M\rangle_{n+1}$ for some $M\in\CT$.
\end{dfn}

\begin{dfn}\cite[Definition 2.3]{radius}\label{raddef}
Let $\X,\Y,\C$ be subcategories of $\mod R$.\\
(1) We denote by $[\X]$ the smallest subcategory of $\mod R$ containing $\{R\}\cup\X$ that is closed under finite direct sums, direct summands and syzygies, i.e, $[\X]=\add\{\,\syz^iX\mid i\ge0,\,X\in\X\,\}$.
When $\X$ consists of a single object $M$, we simply denote it by $[M]$.\\
(2) We denote by $\X \circ \Y$ the subcategory of $\mod R$ consisting of objects $M$ which fits into an exact sequence $0\lrt X\lrt M\lrt Y\lrt 0$ in $\mod R$ with $X\in\X$ and $Y\in\Y$.
We set $\X \bullet \Y=[[\X]\circ [\Y]]$.\\
(3) The {\em ball of radius of $r$ centered at $\C$}, denoted by $[\C]_r$, is defined by $[\C]_0=0$ and $[\C]_r=[\C]_{r-1}\bullet[\C]=[[\C]_{r-1}\circ[\C]]$ for $r\ge2$.
For $\C=\{M\}$ we simply denote it by $[M]_r$.\\
(4) The {\em radius of $\X$}, denoted by $\radius\X$, is defined as the infimum of integers $n\ge 0$ such that there exists a ball of radius $n+1$ centered at a module containing $\X$.
\end{dfn}

\begin{dfn}\cite[Definition 5.1]{radius}
Let $\X,\Y$ be subcategories of $\mod R$.
Denote by $\X \circ \Y$ the subcategory of $\mod R$ consisting of objects $M$ which fits into an exact
sequence $0\lrt X\lrt M\lrt Y\lrt 0$ in $\mod R$ with $X\in\X$ and $Y\in\Y$.
Set $\X \bullet \Y=||\X|\circ |\Y||$, where $|\X|:=\add\X$.
Define $|\X|_r$ for each $r\ge0$ analogously to Definition \ref{raddef}(3).
This is nothing but the subcategory of $\mod R$ consisting of $R$-modules $M$ which is a direct summand of an $R$-module $N$ admitting a filtration $0=N_0\subseteq N_1\subseteq \cdots\subseteq N_r=N$ of $R$-submodules whose subquotients are in $\add\X$.
Note that $\ext\X={|\X|}_\infty:=\bigcup_{r\ge0}{|\X|}_r$.
\end{dfn}

\begin{dfn}
(1) We denote by $\db(R)$ the derived category of bounded complexes of $R$-modules, and identify $\mod R$ with the subcategory of $\db(R)$ consisting of complexes concentrated in degree zero.
Recall that the {\sl derived dimension} of $R$, denoted $\der.\dim R$, is defined to be the dimension of the triangulated category $\db(R)$.\\
(2) An $R$-complex is called {\it perfect} if it is bounded complex of projective $R$-modules.
The {\it singularity category} $\ds(R)$ of $R$, which is also called the {\em stable derived category} of $R$, is defined to be the Verdier quotient of $\db(R)$ by the perfect complexes.
For the definition of the Verdier quotient, we refer to \cite[Remark 2.1.9]{Ne}.
Whenever the singularity category $\ds(R)$ is discussed, we identify each object or subcategory of $\mod R$ with its image in $\ds(R)$ by the composition of the canonical functors $\mod R\rt\db(R)\rt\ds(R)$. The category $\ds(R)$ has been introduced and studied by Buchweitz \cite{bu} in connection with maximal Cohen--Macaulay modules over Gorenstein rings.
In recent years, it has been investigated by Orlov \cite{o1} in relation to the Homological Mirror Symmetry Conjecture.\\
(3) Let $R$ be a {\em (Iwanaga-)Gorenstein} ring, that is, $R$ has finite injective dimension as an $R$-module.
The {\em stable category} $\lcm(R)$ of maximal Cohen--Macaulay modules over $R$ is defined as follows.
The objects are the maximal Cohen--Macaulay $R$-modules, i.e., the $R$-modules $M$ with $\Ext^i_R(M, R)=0$ for all $i>0$.
The hom-set $\Hom_{\lcm(R)}(M, N)$ is the quotient of $\Hom_R(M, N)$ by the $R$-submodule consisting of homomorphisms $M\to N$ factoring through some projective $R$-modules.
It is known that $\lcm(R)$ is triangulated, and triangle equivalent to $\ds(R)$.
\end{dfn}

\begin{rem}\label{hist}
The importance of the notion of derived dimension was first recognized by Bondal and Van den Bergh \cite{BV} in relation to representability of functors.
In fact, they proved that smooth proper commutative/non-commutative varieties have finite derived dimension, which yields that every contravariant cohomological functor of finite type to vector spaces is representable.
Rouquier \cite{R} has proved that the derived dimension of coherent sheaves on a separated scheme of finite type over a field is finite.
Using the notion of derived dimension, Rouquier \cite{R1} constructed an example of artin algebra of representation dimension greater than three  for the first time.
Therefore he could solve a long standing open problem started with Auslander's work in his Queen Mary notes in 1971.
It is known that artinian algebras have finite derived dimension \cite{R}.
Christensen, Krause and Kussin \cite{C, KK} showed that rings of finite global dimension  have finite derived dimension, see also \cite[Proposition 8.3]{R}. More recently, Aihara and Takahashi \cite{AT} proved that derived dimension of a complete local ring with perfect coefficient field is finite. For small values of derived dimension, a number of definitive results have been obtained.
Rings of derived dimension zero have been classified.
It is shown, by Chen, Ye and Zhang \cite{CYZ} (see also \cite [Theorem 12.20]{Be}), that a finite dimensional algebra over an algebraically closed field has derived dimension zero if and only if it is an iterated tilted algebra of Dynkin type.
Recently, Iyengar and Takahashi \cite{ua} proved that an equicharacteristic excellent local ring and a commutative ring essentially of finite type over a field have finite derived dimension.
\end{rem}

\begin{dfn}\cite[Definition 2.1]{ua}
For each integer $n\ge0$, we set
$$
\textstyle\ca^n(R)=\ann_R\Ext_R^{\geqslant n}(\mod R,\mod R)=\bigcap_{M,N\in\mod R,\,i\ge n}\Ext_R^i(M,N),
$$
and call $\ca(R)=\bigcup_{n\ge0}\ca^n(R)$ the {\em cohomology annihilator} of $R$.
There is an ascending chain $0=\ca^0(R)\subseteq\ca^1(R)\subseteq\ca^2(R)\subseteq\cdots$ of ideals of $R$, and this stabilizes as $R$ is noetherian, namely, $\ca(R)=\ca^n(R)$ for $n\gg0$.
Note also that $\ca^n(R)=\ann_R\Ext_R^n(\mod R,\mod R)=\bigcap_{M,N\in\mod R}\ann_R\Ext_R^n(M,N)$.

As we have mentioned in the introduction, Iyengar and Takahashi \cite{ua}
mainly investigated the relation of the existence of non-trivial
cohomology annihilators and the existence of strong generators for the
category of finitely generated modules.
Recall that a finitely generated $R$-module $G$ is a {\em strong generator} for $\mod R$ if there exist non-negative integers $s$ and $n$ such that $\syz^s(\mod R)\subseteq|G|_n$.
\end{dfn}

\section{annihilation of cohomology and finiteness of derived dimension}
This section reveals a close link between the notion of cohomology
annihilator and finiteness of the derived dimension as well as the dimension of the singularity category. We start by stating and proving the most general
structure theorem; actually, all of the other
results of this paper are deduced from this.
\begin{theorem}\label{mthm}
Let $I$ be an ideal of $R$.
Let $\xx=x_1,\dots,x_n$ be a system of generators of $I$.
\begin{enumerate}[\rm(1)]
\item
Let $M$ be an $R$-module such that $I\Ext_R^1(M,\syz_RM)=0$.
Then there exists an $R/I$-module $L$ such that $M$ belongs to ${|\bigoplus_{i=0}^n\syz_R^iL|}_{n+1}$.
\item
One has
\begin{align*}
\nf^{-1}(\v(I))&=\textstyle\ext\left\{\syz_R^i(R/\p)\mid0\le i\le n,\,\p\in\v(I)\right\}\\
&=\textstyle{\left|\bigoplus_{i=0}^n\ext\syz_R^i\{R/\p\}_{\p\in\v(I)}\right|}_{n+1}\\
&=\textstyle{\left|\bigcup_{e>0,\,0\le i\le n}\syz_R^i(\mod R/I^{[e]})\right|}_{n+1}\\
&=\textstyle\bigcup_{e>0}\bigcup_{X\in\,\bigoplus_{i=0}^n\syz_R^i(\mod R/I^{[e]})}{|X|}_{n+1},
\end{align*}
where $I^{[e]}$ is the ideal generated by $\xx^e=x_1^e,\dots,x_n^e$.
\end{enumerate}
\end{theorem}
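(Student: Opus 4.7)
The plan is to prove part (1), which is the main technical content, and then deduce part (2) by combining (1) with closure properties of the resolving subcategory $\nf^{-1}(\v(I))$ and a Noetherian filtration argument that replaces $I$ by its ``Frobenius-type'' powers $I^{[e]}$.

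For part (1), I would begin with a free cover $0\to\syz_RM\to F\xrightarrow{f} M\to 0$ and exploit the hypothesis $I\Ext_R^1(M,\syz_RM)=0$: since each generator $x_j$ of $I$ annihilates the extension class, the endomorphism $x_j\cdot\mathrm{id}_M$ lifts through $f$, producing an $R$-linear section $s_j\colon M\to F$ with $fs_j=x_j\cdot\mathrm{id}_M$. Assembling the inclusion $\iota\colon\syz_RM\hookrightarrow F$ with the $s_j$ into a single map
$$\Phi\colon\syz_RM\oplus M^{\oplus n}\to F,\qquad (a,b_1,\dots,b_n)\mapsto\iota(a)+\textstyle\sum_js_j(b_j),$$
I obtain the four-term exact sequence
$$0\to Z_1(\xx,M)\to\syz_RM\oplus M^{\oplus n}\xrightarrow{\Phi}F\to M/IM\to 0,$$
whose outer terms are the module of Koszul $1$-cycles and the $R/I$-module $H_0(\xx,M)=M/IM$. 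Splitting this into two short exact sequences realizes $M$, as a direct summand of $\syz_RM\oplus M^{\oplus n}$, as a two-step extension built from $Z_1(\xx,M)$ and a first syzygy of $H_0(\xx,M)$.

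To upgrade this two-step decomposition to the claimed $(n+1)$-step one, I would iterate the construction using the higher Koszul relations: at level $i$ the cycle module $Z_i(\xx,M)$ is further decomposed using maps built from suitable ``multisections'' (indexed by the Koszul combinatorics of the $s_j$), and the new $R/I$-quotient appearing at that stage is $H_i(\xx,M)$ with a syzygy shift of $i$, until the chain terminates at $Z_n(\xx,M)=H_n(\xx,M)$. Setting $L=\bigoplus_{i=0}^nH_i(\xx,M)$---which is an $R/I$-module because each Koszul homology is annihilated by $I$---then yields $M\in|\bigoplus_{i=0}^n\syz_R^iL|_{n+1}$, as required.

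For part (2), the containments $\supseteq$ in the four equalities all follow from the fact that $\nf^{-1}(\v(I))$ is resolving (hence closed under direct summands, extensions and syzygies), together with the observations that $R/\p\in\nf^{-1}(\v(I))$ for $\p\in\v(I)$ and that any $R/I^{[e]}$-module has nonfree locus contained in its support, which lies in $\v(I^{[e]})=\v(I)$. For the reverse inclusions, take $M\in\nf^{-1}(\v(I))$; the finitely generated module $\Ext_R^1(M,\syz_RM)$ has support contained in $\nf(M)\subseteq\v(I)$, hence is annihilated by some power of $I$, and in particular by $I^{[e]}$ for $e\gg 0$. Apply part (1) to the ideal $I^{[e]}$ with generators $\xx^e=x_1^e,\dots,x_n^e$ to obtain an $R/I^{[e]}$-module $L$ with $M\in|\bigoplus_{i=0}^n\syz_R^iL|_{n+1}$; this immediately gives the last two equalities. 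For the extension-closure description by $R/\p$'s, use the Noetherian fact that every $R/I^{[e]}$-module has a finite filtration by modules of the form $R/\p$ with $\p\in\v(I^{[e]})=\v(I)$, and then transfer the filtration through syzygies and extensions.

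The main obstacle is the iterative Koszul step in part (1): after the first extension, the cycle module $Z_1(\xx,M)$ is not itself an $R/I$-module, so one cannot naively re-apply the base construction. Instead, one must invoke the higher-degree Koszul differentials---built simultaneously from the system of sections $s_j$---to peel off one layer of the form $\syz_R^iH_i(\xx,M)$ at each step, pushing the remainder into $Z_{i+1}(\xx,M)$. Keeping track of the syzygy index $i$ so that it never exceeds $n$ and the appearing $R/I$-module remains the single direct sum $L=\bigoplus_iH_i(\xx,M)$ (rather than nested syzygies of different pieces) is the delicate bookkeeping at the technical heart of the argument.
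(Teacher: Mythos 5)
Your part (2) is essentially the paper's argument: the reverse inclusions come from $\nf^{-1}(\v(I))$ being resolving, the $R/\mathfrak p$-filtration handles the extension-closure description, and the forward inclusion comes from applying part (1) after noting that $\Ext_R^1(M,\syz_RM)$ has support in $\v(I)$ and hence is killed by $I^{[e]}$ for $e\gg 0$ (Frobenius powers and ordinary powers being cofinal). The minor simplification of controlling only $\Ext^1(M,\syz M)$ rather than invoking the stronger annihilation statement from \cite[Lemma 3.4]{kos} is fine, since that is exactly the hypothesis part (1) requires.

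The gap is in part (1). Your first step is correct and is indeed the opening move of the construction the paper invokes: lifting $x_j\cdot\mathrm{id}_M$ along a free cover to get sections $s_j\colon M\to F$, assembling $\Phi$, and reading off the short exact sequence $0\to Z_1(\xx,M)\to\syz_RM\oplus M^{\oplus n}\to\syz_R H_0(\xx,M)\to 0$, of which $M$ is a summand of the middle term. But you then wave at an ``iterative Koszul step'' producing $0\to Z_{i+1}\to (\ast)\to\syz^{?}H_i\to 0$ by ``multisections indexed by the Koszul combinatorics of the $s_j$,'' and you yourself flag this as the delicate heart of the argument without carrying it out. This is precisely the content the paper outsources to \cite[Lemma 2.13]{ua} together with \cite[Corollary 3.2(2)]{kos}, whose output is a chain of exact sequences $0\to H_i\to E_i\to\syz_RE_{i-1}\to 0$ with $E_0=H_0$ and $M$ a direct summand of $E_n$; applying $\syz_R^{n-i}$ and chaining them gives the $(n+1)$-ball. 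Note that in that chain the Koszul \emph{homologies} $H_i$ appear as subobjects of the $E_i$'s, whereas your first step produces the \emph{cycle} module $Z_1$, and $Z_1$ carries no obvious annihilation hypothesis of the form $I\Ext_R^1(Z_1,\syz_RZ_1)=0$, so the base construction cannot simply be re-run on it. Also, the maps $s_j\colon M\to F$ are not endomorphisms, so products of them do not literally make sense; to make ``multisections'' precise one really needs the multiplicative (DG-algebra/mapping-cone) structure of the Koszul complex, which is what \cite{kos} exploits. As written, the proposal reproduces the first step and correctly identifies where the difficulty lies, but it does not close the induction, so part (1) is not proven.

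Two smaller points worth noting: (i) the definition of $|\mathcal{X}|_r$ you need is exactly the filtration-with-subquotients-in-$\add\mathcal{X}$ description (Definition \cite[5.1]{radius}); when you substitute a 2-step decomposition of $Z_1$ into the 2-step decomposition of $\syz_RM\oplus M^{\oplus n}$, you must justify that the combined object is a ($\le n+1$)-step filtration in the sense of this definition rather than an unbounded iteration, and this bookkeeping is nontrivial. (ii) Your plan reaches a final module built from $\syz^i H_i$-type pieces, whereas the paper's argument produces pieces of the form $\syz^{n-i}H_i$; either indexing is compatible with the target $L=\bigoplus_{i=0}^n H_i$, but the fact that your sketch and the paper's recursion disagree on which shifts arise is another sign that the iteration you envision is not yet the right one.
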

\begin{proof}
(1) Set $H_i=\H_i(\xx,M)$ and $L=H_0\oplus\cdots\oplus H_n$.
Since $\xx$ annihilates each $H_i$, one can regard $L$ as a module over $R/I$.
Using \cite[Lemma 2.13]{ua} and \cite[Corollary 3.2(2)]{kos}, we find exact sequences
$$
0 \to H_i \to E_i \to \syz_RE_{i-1} \to 0\quad(1\le i\le n)
$$
of $R$-modules with $E_0=H_0$ such that $M$ is a direct summand of $E_n$.
Hence for each $1\le i\le n$ there is an exact sequence
$$
0 \to \syz_R^{n-i}H_i \to \syz_R^{n-i}E_i \to \syz_R^{n-i+1}E_{i-1} \to 0,
$$
and an inductive argument shows that $M$ is in ${|\bigoplus_{i=0}^n\syz_R^iL|}_{n+1}$.

(2) We begin with showing the inclusion
\begin{equation}\label{1}
\nf^{-1}(\v(I))\supseteq\ext\{\syz_R^i(R/\p)\mid0\le i\le n,\,\p\in\v(I)\}.
\end{equation}
Since $\nf^{-1}(\v(I))$ is resolving, it is enough to check that $\syz_R^i(R/\p)$
belongs to $\nf^{-1}(\v(I))$ for $0\le i\le n$ and $\p\in\v(I)$.
Let $\q$ be a prime ideal of $R$.
The $R_\q$-module $\syz_R^i(R/\p)_\q$ is stably isomorphic to $\syz_{R_\q}^i(R_\q/\p R_\q)$,
and hence if $\q$ does not contain $\p$, then it is $R_\q$-free.
Hence we have $\nf(\syz_R^i(R/\p))\subseteq\v(\p)\subseteq\v(I)$, which implies that $\syz_R^i(R/\p)$ is in $\nf^{-1}(\v(I))$.
Thus \eqref{1} holds.

It is easy to see that the following two inclusions hold.
\begin{equation}\label{1.5}
\textstyle\ext\{\syz_R^i(R/\p)\mid0\le i\le n,\,\p\in\v(I)\}\supseteq{\left|\bigoplus_{i=0}^n\ext\syz_R^i\{R/\p\}_{\p\in\v(I)}\right|}_{n+1},
\end{equation}
\begin{equation}\label{3}
\textstyle{\left|\bigcup_{e>0,\,0\le i\le n}\syz_R^i(\mod R/I^{[e]})\right|}_{n+1}\supseteq\bigcup_{e>0}\bigcup_{X\in\,\bigoplus_{i=0}^n\syz_R^i(\mod R/I^{[e]})}{|X|}_{n+1}.
\end{equation}

Next, let us prove the inclusion
\begin{equation}\label{2}
\textstyle{\left|\bigoplus_{i=0}^n\ext\syz_R^i\{R/\p\}_{\p\in\v(I)}\right|}_{n+1}\supseteq{\left|\bigcup_{e>0,\,0\le i\le n}\syz_R^i(\mod R/I^{[e]})\right|}_{n+1}.
\end{equation}
Fix integers $e>0$ and $0\le i\le n$.
Pick an $R$-module $M$ in $\mod R/I^{[e]}$.
Take a filtration
$$
0=M_0\subseteq M_1\subseteq\cdots\subseteq M_r=M
$$
of $R$-submodules such that for each $1\le j\le r$ one has $M_j/M_{j-1}\cong R/\p_j$ with $\p_j\in\supp M$.
As $I^{[e]}$ annihilates $M$, the support of $M$ is contained in $\v(I)$.
Hence $\p_j$ is in $\v(I)$.
Applying the syzygy functor $\syz_R^i$ to the above filtration, we observe that $\syz_R^iM$ belongs to $\ext\syz_R^i\{R/\p\}_{\p\in\v(I)}$.
Now \eqref{2} follows.

Finally, we show that
\begin{equation}\label{4}
\textstyle\bigcup_{e>0}\bigcup_{X\in\,\bigoplus_{i=0}^n\syz_R^i(\mod R/I^{[e]})}{|X|}_{n+1}\supseteq\nf^{-1}(\v(I)).
\end{equation}
Let $M$ be an $R$-module whose nonfree locus is contained in $\v(I)$.
It then follows from \cite[Lemma 3.4]{kos} that there exists an integer $e>0$
such that the sequence $\xx^e=x_1^e,\dots,x_n^e$ annihilates $\Ext_R^i(M,N)$ for all $i>0$ and all $N\in\mod R$.
By (1) there is an $R/I^{[e]}$-module $L$ such that $M$ belongs to ${|X|}_{n+1}$, where $X:=\bigoplus_{i=0}^n\syz_R^iL$.
This shows the inclusion \eqref{4}.

Combining \eqref{1}, \eqref{1.5}, \eqref{3}, \eqref{2} and \eqref{4} completes the proof of the theorem.
\end{proof}

The theorem below highlights the benefit of considering cohomology annihilators for
commutative rings. In fact, this result makes precise the close link between the notion of
cohomology annihilator and other well-studied notions such as derived dimension, singularity
dimension and strong generators for module categories.

\begin{theorem}\label{5cond}
Let $(R,\m,k)$ be a $d$-dimensional local ring.
Consider the following five conditions.
\begin{enumerate}[\rm(1)]
\item
$\ca(R)$ is $\m$-primary.
\item
$\syz^n(\mod R)\subseteq{|\bigoplus_{i=0}^d\syz^ik|}_r$ for some $n\ge0$ and $r\ge1$.
\item
$R$ is an isolated singularity, and $\syz^n(\mod R)$ has finite radius for some $n\ge0$.
\item
$R$ is an isolated singularity, and $\db(R)$ has finite dimension.
\item
$R$ is an isolated singularity, and $\ds(R)$ has finite dimension.
\end{enumerate}

Then the implications {\rm(1)} $\Leftrightarrow$ {\rm(2)} $\Leftrightarrow$ {\rm(3)} $\Rightarrow$ {\rm(4)} $\Rightarrow$ {\rm(5)} hold.
If $R$ is Gorenstein, then the five conditions are equivalent.
\end{theorem}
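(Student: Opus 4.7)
My plan is to establish $(2)\Rightarrow(1)\Rightarrow(3)\Rightarrow(2)$, then the chain $(3)\Rightarrow(4)\Rightarrow(5)$, and finally $(5)\Rightarrow(1)$ under the Gorenstein hypothesis.

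For $(2)\Rightarrow(1)$, the crucial point is that $\m$ annihilates $\Ext_R^{\geq 1}(k,-)$, since the differentials in the minimal free resolution of $k$ have entries in $\m$. Consequently $\m$ annihilates $\Ext_R^{\geq 1}(\bigoplus_{i=0}^{d}\syz^{i}k,-)$. Propagating this through the $r$ extension steps realizing membership in ${|\bigoplus_{i=0}^{d}\syz^{i}k|}_{r}$ via the long exact sequence of $\Ext$ yields $\m^{r}\cdot\Ext_R^{\geq 1}(N,-)=0$ for every $N$ in the ball. Taking $N=\syz^{n}M$ for an arbitrary $M\in\mod R$ produces $\m^{r}\,\Ext_R^{\geq n+1}(M,-)=0$, and hence $\m^{r}\subseteq\ca^{n+1}(R)\subseteq\ca(R)$.

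For $(1)\Rightarrow(3)$, isolated singularity follows by a standard localization argument: for each nonmaximal $\p$, some element $f\in\ca(R)\setminus\p$ annihilates $\Ext^{\geq s}_{R_{\p}}(-,-)$, forcing $R_{\p}$ to have finite global dimension and thus be regular. For finite radius, choose a system of parameters $x_{1},\ldots,x_{d}$ and $e\geq 1$ with $I:=(x_{1}^{e},\ldots,x_{d}^{e})\subseteq\ca(R)$; for any $M\in\syz^{s}(\mod R)$ with $s$ chosen so that $\ca(R)=\ca^{s}(R)$, the hypothesis gives $I\,\Ext_R^{1}(M,\syz M)=0$, and Theorem \ref{mthm}(1) places $M$ in a ball of radius $d+1$ centered at $\bigoplus_{i=0}^{d}\syz^{i}L$ for some $R/I$-module $L$. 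Extracting a single center independent of $M$ uses Theorem \ref{mthm}(2) together with isolated singularity, which places $\syz^{s}(\mod R)\subseteq\nf^{-1}(\{\m\})$ into an explicit ball with bounded radius.

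For $(3)\Rightarrow(2)$, given $[G]_{r}\supseteq\syz^{n}(\mod R)$, sufficiently high syzygies of $G$ lie in $\nf^{-1}(\{\m\})$ by the isolated singularity hypothesis, hence in $\ext\{\syz^{i}k\mid 0\leq i\leq d\}$ by Theorem \ref{mthm}(2); since $G$ is a fixed module, it belongs to ${|\bigoplus_{i=0}^{d}\syz^{i}k|}_{t}$ for some specific finite $t$, and composing the two balls yields $(2)$. For $(3)\Rightarrow(4)$, the generator of $\syz^{n}(\mod R)$ in $\mod R$ lifts, after adjoining $R$ and finitely many shifts, to a strong generator of $\db(R)$, with bounded generation preserved. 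For $(4)\Rightarrow(5)$, the category $\ds(R)$ is a Verdier quotient of $\db(R)$, so $\dim\ds(R)\leq\dim\db(R)<\infty$.

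Finally, for $(5)\Rightarrow(1)$ in the Gorenstein case, the triangle equivalence $\ds(R)\simeq\lcm(R)$ transports finite dimension to $\lcm(R)$; under the isolated singularity assumption MCM modules are locally free on the punctured spectrum, and the argument of the second paragraph (adapted to the triangulated setting of $\lcm(R)$) produces an $\m$-primary annihilator for $\Ext_R^{\geq 1}$ between MCM modules, which over a Gorenstein ring propagates to $\ca(R)$ because every $R$-module has a MCM syzygy within bounded index. The principal technical obstacle is achieving the uniform radius bound in $(1)\Rightarrow(3)$, where the per-module centers $\bigoplus_{i}\syz^{i}L$ supplied by Theorem \ref{mthm}(1) must be consolidated into a single bounded ball; this is precisely where the sharper equalities of Theorem \ref{mthm}(2) combined with the isolated singularity condition do essential work.
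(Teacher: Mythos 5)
Your implication $(2)\Rightarrow(1)$ is a nice elementary argument and genuinely different from the paper's route: the paper instead proves $(1)\Rightarrow(2)$ directly and closes the cycle with $(3)\Rightarrow(1)$ by citing a theorem of Iyengar--Takahashi identifying $\sing R$ with $\v(\ca(R))$ under a strong-generation hypothesis. The observation that $\m$ annihilates $\Ext^{\geq 1}_R(k,-)$ (since $\Ext^i_R(k,N)$ is a $k$-vector space via the $\End_R(k)$-action) and then propagating through the $r$-step filtration to get $\m^r\Ext_R^{\geq n+1}(\mod R,\mod R)=0$ is correct and self-contained.

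However, there is a genuine gap in your $(1)\Rightarrow(3)$. Theorem \ref{mthm}(1) gives, for each $M\in\syz^s(\mod R)$, a ball ${|\bigoplus_{i=0}^d\syz^iL_M|}_{d+1}$ whose center $L_M$ depends on $M$. You claim the consolidation into a single center comes from Theorem \ref{mthm}(2) plus the isolated singularity hypothesis; but all the expressions in Theorem \ref{mthm}(2) are either balls of radius $n+1$ centered at \emph{whole subcategories} (such as $\bigcup_{e>0,\,i}\syz^i(\mod R/I^{[e]})$, or $\bigoplus_i\ext\syz^i\{R/\p\}$) or unbounded extension closures, and none of them yields a bounded ball around a single module, which is what ``finite radius'' requires. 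The actual consolidation step is the Loewy-length bound: since each $L_M$ is a module over the artinian ring $R/I$ with $I$ a parameter ideal, it lies in ${|k|}_{\ell}$ where $\ell$ is the Loewy length of $R/I$, which is independent of $M$; hence $M\in{|\bigoplus_{i=0}^d\syz^ik|}_{\ell(d+1)}$. This is exactly what the paper does in its $(1)\Rightarrow(2)$ and it is the missing idea in your argument.

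Your $(3)\Rightarrow(2)$ is also underdeveloped. The ball $[G]_r$ of Definition 2.10 is built from $[G]=\add\{R,\syz^jG: j\geq0\}$, which contains \emph{all} syzygies of $G$, not just $\syz^dG$. Placing $\syz^dG$ in $\ext(\bigoplus_{i\le d}\syz^ik)$ and hence in ${|\bigoplus_{i\le d}\syz^ik|}_t$ does not control $\syz^jG$ for $j>d$: applying $\syz^{j-d}$ to the filtration of $\syz^dG$ produces subquotients in $\add(\bigoplus_i\syz^{j-d+i}k)$, whose index drifts upward with $j$. You would need an additional argument to fold these back into ${|\bigoplus_{i=0}^d\syz^ik|}_{r'}$ for a fixed $r'$. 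The paper sidesteps this by proving $(3)\Rightarrow(1)$ rather than $(3)\Rightarrow(2)$. Your remaining steps, $(3)\Rightarrow(4)$ (lifting the module-level generator to $\db(R)$, the paper uses \cite[Proposition 2.6]{AAITY}), $(4)\Rightarrow(5)$, and the Gorenstein $(5)\Rightarrow(1)$ via the equivalence $\ds(R)\simeq\lcm(R)$ and the isomorphism $\Ext_R^{d+1}(A,B)\cong\Hom_{\lcm(R)}(\syz^dA,\syz^{-d-1}\syz^dB)$, are all in line with the paper.
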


\begin{proof}
(1) $\Rightarrow$ (2):
There are integers $n\ge0$ and $t\ge1$ such that $\m^t\Ext_R^{>n}(\mod R,\mod R)=0$.
Take a parameter ideal $Q$ of $R$ contained in $\m^t$.
Then $Q\Ext_R^{>0}(\syz^n(\mod R),\mod R)=0$.
Let $s\ge1$ be the Loewy length of the artinian ring $R/Q$, i.e., the minimal integer $i$ with $\m^i(R/Q)=0$.
The first assertion of Theorem \ref{mthm} implies that for each $R$-module $M$ in $\syz^n(\mod R)$
there exists an $R/Q$-module $L$ such that $M$ is in ${|\bigoplus_{i=0}^d\syz^iL|}_{d+1}$.
Note that $L$ belongs to ${|k|}_s$ as an $R$-module.
Hence $M$ is in ${|\bigoplus_{i=0}^d\syz^ik|}_{s(d+1)}$.

(2) $\Rightarrow$ (3):
For each nonmaximal prime ideal $\p$ of $R$ we have
$\syz^n(\mod R_\p)\subseteq{|(\bigoplus_{i=0}^d\syz^ik)_\p|}_r=\add R_\p$, which shows that $R_\p$ is regular (of dimension at most $n$).
Hence $R$ is an isolated singularity.
It is obvious that $\syz^n(\mod R)$ has finite radius.

(3) $\Rightarrow$ (1):
We see from \cite[Theorem 4.3]{ua} that $\sing R=\v(\ca(R))$.
Since $R$ is an isolated singularity, the ideal $\ca(R)$ is $\m$-primary.

(2) $\Rightarrow$ (4):
Set $G=\bigoplus_{i=0}^d\syz^ik$ and pick any module $M$ in $\mod R$.
Condition (2) implies that $\syz^nM$ is in $|G|_r$.
It is easy to see that in $\db(R)$ the module $M$ belongs to $\langle G\oplus R\rangle_{r+n}$.
Hence $\db(R)=\langle G\oplus R\rangle_{2(r+n)}$ by \cite[Proposition 2.6]{AAITY}.

(4) $\Rightarrow$ (5):
The implication holds trivially.

Now, suppose that $R$ is Gorenstein, and let us show the implication (5) $\Rightarrow$ (1).
By \cite[Proposition 4.3]{BFK} (see also \cite[Lemma 5.3(2)]{dim}),
there exists an integer $t\ge1$ such that $\m^t\Hom_{\ds(R)}(X,Y)=0$ for all $X,Y\in\ds(R)$.
Since $R$ is Gorenstein, Theorem 4.4.1 of \cite{bu} yields that the singularity category $\ds(R)$ is equivalent to the stable
category $\lcm(R)$ of maximal Cohen--Macaulay $R$-modules as an $R$-linear triangulated category,
and hence $\m^t\Hom_{\lcm(R)}(M,N)=0$ for all $M,N\in\lcm(R)$.
For $R$-modules $A,B$ there are isomorphisms
$$
\Ext_R^{d+1}(A,B)\cong\Ext_R^1(\syz^dA,B)\cong\Ext_R^2(\syz^dA,\syz B)\cong\cdots\cong\Ext_R^{d+1}(\syz^dA,\syz^dB)
$$
of $R$-modules; the first isomorphism is clear, and the other isomorphisms follow from the
fact that $\syz^dA$ is a maximal Cohen--Macaulay module over the Gorenstein local ring $R$.
Since there is an isomorphism $\Ext_R^{d+1}(\syz^dA,\syz^dB)\cong\Hom_{\lcm(R)}(\syz^dA,\syz^{-d-1}\syz^dB)$, we observe $\m^t\Ext_R^{d+1}(\mod R,\mod R)=0$, which implies that $\m^t$ is
contained in $\ca^{d+1}(R)$, whence in $\ca(R)$.
\end{proof}

It is fairly easy to see that the singular locus of $R$ is contained in the defining
closed subset of the cohomology annihilator ideal of $R$, $V(\ca(R))$; see \cite[Lemma 2.9(2)]{ua}.
So it is natural to ask that: For which rings is there an equality $\sing R=V(\ca(R))$?
As an attempt in this direction, Iyengar and Takahashi have shown that
 for   a commutative noetherian ring  $R$ which is either a finitely generated
algebra over a filed or an equicharacteristic excellent local ring the equality holds; see \cite[Theorems 5.3, 5.4]{ua},
(see also \cite[Theorem 4.3]{ua}).
The result below, can be considered as a partial answer to the raised question.

\begin{theorem}\label{sing}
Let $(R, \m)$ be a $d$-dimensional Gorenstein local ring with $\dim\ds(R)<\infty$ (e.g., with finite derived  dimension).
Then $V(\ca(R))=\sing R$.
In particular, if $R$ is reduced, then the ideal $\ca(R)$ contains a nonzerodivisor.
\end{theorem}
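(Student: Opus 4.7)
The inclusion $\sing R\subseteq V(\ca(R))$ is recalled just before the theorem from \cite[Lemma 2.9(2)]{ua}, so the real content is the reverse containment. My plan is to imitate the argument for the implication (5) $\Rightarrow$ (1) of Theorem \ref{5cond}, but to keep track of the actual annihilator ideal supplied by \cite[Proposition 4.3]{BFK} rather than collapsing it to a power of $\m$. Setting $n=\dim\ds(R)$, I pick $G\in\ds(R)$ with $\ds(R)=\langle G\rangle_{n+1}$; via Buchweitz's equivalence $\ds(R)\simeq\lcm(R)$, valid since $R$ is Gorenstein, $G$ may be taken to be a maximal Cohen--Macaulay $R$-module. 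The general form of \cite[Proposition 4.3]{BFK} then produces the ideal $J:=\ann_R(\End_{\ds(R)}(G))^{n+1}$ which annihilates every $\Hom_{\ds(R)}(X,Y)$.

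Reusing the chain of isomorphisms
\[
\Ext^{d+1}_R(A,B)\cong\Ext^1_R(\syz^dA,B)\cong\cdots\cong\Ext^{d+1}_R(\syz^dA,\syz^dB)\cong\Hom_{\lcm(R)}(\syz^dA,\syz^{-d-1}\syz^dB)
\]
from the proof of Theorem \ref{5cond}, which is available since $R$ is Gorenstein, I obtain $J\subseteq\ca^{d+1}(R)\subseteq\ca(R)$, and hence $V(\ca(R))\subseteq V(J)=\supp_R\End_{\ds(R)}(G)$, the last equality using that $\End_{\ds(R)}(G)$ is a finitely generated $R$-module. The core step is then to verify $\supp_R\End_{\ds(R)}(G)\subseteq\sing R$: I identify $\End_{\ds(R)}(G)$ with $\underline{\End}_R(G)=\End_R(G)/P(G,G)$, where $P(G,G)$ is the submodule of endomorphisms factoring through a free module and equals the image of the trace map $\Hom_R(G,R)\otimes_RG\to\End_R(G)$. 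Because $G$ is finitely generated, localization commutes with both $\Hom$ and $\otimes$ here, so $\underline{\End}_R(G)_\p\cong\underline{\End}_{R_\p}(G_\p)$ for every $\p\in\spec R$. Fixing any $\p\notin\sing R$: since $R$ is Cohen--Macaulay and $G$ is MCM, $G_\p$ is MCM over $R_\p$, and regularity of $R_\p$ forces $G_\p$ to be free, so $\underline{\End}_{R_\p}(G_\p)=0$. This delivers the desired $V(\ca(R))\subseteq\sing R$, and I expect the most delicate point to be exactly this commutation of $\underline{\End}$ with localization through the trace-map description of $P(G,G)$.

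For the ``in particular'' clause, suppose $R$ is reduced. Then the associated primes of $R$ coincide with its minimal primes, and each minimal prime $\p$ makes $R_\p$ a field, so $\p\notin\sing R=V(\ca(R))$. Hence $\ca(R)$ is contained in no associated prime, and prime avoidance applied to the finite union of associated primes produces a nonzerodivisor inside $\ca(R)$.
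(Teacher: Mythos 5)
Your proof is correct and runs parallel to the paper's own argument, but with a different choice of annihilating ideal, which is worth noting. The paper sets $I=\ann_R\Ext_R^1(M,\syz^1M)$ for a generator $M$ of $\lcm(R)=\langle M\rangle_n$, invokes \cite[Lemma 2.13]{ua} to deduce $I\Ext_R^{>0}(M,\cm(R))=0$, runs the filtration/ghost argument explicitly in $\lcm(R)$ to get $I^n\Ext_R^{>0}(\cm(R),\cm(R))=0$, and then shows $V(I)\subseteq\sing R$ directly: if $\p\notin\sing R$ then $M_\p$ is free, so $\Ext_R^1(M,\syz^1M)_\p=0$. You instead take $J_0=\ann_R\End_{\ds(R)}(G)=\ann_R\underline{\End}_R(G)$, obtain $J_0^{\,n+1}\subseteq\ca^{d+1}(R)$ by the same ghost-plus-dimension-shift mechanism, and then verify $V(J_0)=\supp_R\underline{\End}_R(G)\subseteq\sing R$ via the compatibility $\underline{\End}_R(G)_\p\cong\underline{\End}_{R_\p}(G_\p)$, which you justify through the trace-map description of $P(G,G)$. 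Both ideals serve the same purpose (they are the annihilator of a finitely generated $R$-module whose support lies inside $\sing R$), and both downstream steps are essentially the same dimension-shifting isomorphisms. One small caveat: you attribute to \cite[Proposition 4.3]{BFK} a ``general form'' producing the explicit ideal $(\ann_R\End_{\ds(R)}(G))^{n+1}$; the paper cites that result only for the weaker statement that some power of $\m$ does the job, and in the present proof the authors instead carry out the filtration argument by hand. Your argument is fine, but you should either check that BFK really states the ideal-theoretic version or simply redo the (easy) ghost lemma as the paper does. Your treatment of the ``in particular'' clause via prime avoidance over associated primes is equivalent to the paper's remark that $\dim\sing R<d$ forces $\ca(R)$ to miss every minimal (hence associated) prime.
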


\begin{proof}
If $R$ is reduced, then $\dim\sing R<d$.
The last assertion follows from this.
Let us show the first assertion.
Combining our assumption with \cite[Theorem 4.4.1]{bu}, we have $\dim\lcm(R)<\infty$.
Take an object $M\in\cm(R)$ such that $\lcm(R)={\langle M\rangle}_n$, for some integer $n>0$.
Set $I=\ann_{R}\Ext_{R}^1(M, \syz^1M)$, and apply \cite[Lemma 2.13]{ua} to conclude that $I\Ext_{R}^i(M, \cm(R))=0$ for all $i>0$.
It is seen that $I\Hom_{\lcm(R)}({\langle M\rangle},\lcm(R))=0$.

Pick any $X,Y\in\lcm(R)={\langle M\rangle}_n$.
Then there is an exact triangle $A \to B \to C \to A[1]$ in $\lcm(R)$ with $A\in{\langle M\rangle}_{n-1}$ and $C\in\langle M\rangle$ such that $X$ is a direct summand of $B$.
There is an exact sequence
$$
\Hom_{\lcm(R)}(C,Y)\to\Hom_{\lcm(R)}(B,Y)\to\Hom_{\lcm(R)}(A,Y).
$$
By the induction hypothesis, $I^{n-1}$ and $I$ annihilate $\Hom_{\lcm(R)}(C,Y)$ and $\Hom_{\lcm(R)}(A,Y)$ respectively.
Hence $I^n$ annihilates $\Hom_{\lcm(R)}(B,Y)$, and $\Hom_{\lcm(R)}(X,Y)$.
It follows that $I^n\Hom_{\lcm(R)}(\lcm(R),\lcm(R))=0$, which implies $I^n\Ext_R^{>0}(\cm(R),\cm(R))=0$.
We thus obtain $V(\ca(R))\subseteq V(I)$.

In view of \cite[Lemma 2.9(2)]{ua}, it suffices to show that $V(I)\subseteq\sing R$.
To see this, let $\p$ be a prime ideal which is not in $\sing R$.
Then $R_{\p}$ is a regular ring.
As $M$ is maximal Cohen--Macaulay, $M_{\p}$ is a free $R_{\p}$-module, implying that $\p$ does not belong to $V(I)$.
\end{proof}

\begin{rem}
A more general version than Theorem \ref{sing} (for not necessarily local irngs) appears in \cite{jc}.
\end{rem}

The next theorem, asserts that existence of an ideal of $R$ whose
defining closed subset covers the nonfree locus of $\syz^n(\mod R)$, for any $n\ge 1$,
guarantees the existence of a subcategory $\X$ of $\mod R$ such that $\syz^n(\mod R)$ is contained in
extension closure of syzygies of $\X$ as well as
in the thick closure of $\X\cup R$, ensuring that singular locus of $R$ has finite dimension.
We first state a lemma, which is essentially included in \cite[Lemma 3.4]{kos}.

\begin{lem}\label{nfis}
Let $I$ be an ideal of $R$.
Let $M$ be an $R$-module.
The following are equivalent for each integer $n\ge0$.
\begin{enumerate}[\rm(1)]
\item
The syzygy $\syz^nM$ is in $\nf^{-1}(\v(I))$.
\item
There exists an integer $t>0$ such that $I^t\Ext_R^{>n}(M,\mod R)=0$.
\end{enumerate}
\end{lem}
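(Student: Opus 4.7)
The plan is to reduce each direction to a localization argument combined with \cite[Lemma 3.4]{kos}; the present lemma is essentially the same statement shifted by $n$ syzygies, which is why the authors flag it as already contained there. Throughout I will write $I=(y_1,\dots,y_m)$ and use the standard dimension-shifting isomorphism $\Ext_R^{i+n}(M,-)\cong\Ext_R^i(\syz^nM,-)$ for $i>0$, obtained by splicing the short exact sequences $0\to\syz^{j+1}M\to F_j\to\syz^jM\to0$ with $F_j$ free; this isomorphism is $R$-linear, so $R$-annihilators transfer between the two Ext modules.

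For (2) $\Rightarrow$ (1), I would take any prime $\p\not\in\v(I)$ and pick $a\in I\setminus\p$. Then $a^t$ is a unit in $R_\p$ and annihilates $\Ext_R^{>n}(M,N)_\p\cong\Ext_{R_\p}^{>n}(M_\p,N_\p)$ for every $N\in\mod R$. Since every object of $\mod R_\p$ is a localization of one in $\mod R$, this yields $\Ext_{R_\p}^{>n}(M_\p,-)=0$ on $\mod R_\p$, so $(\syz^nM)_\p\cong\syz^n_{R_\p}M_\p$ (up to a free summand) is $R_\p$-projective, hence free. Therefore $\nf(\syz^nM)\subseteq\v(I)$.

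For (1) $\Rightarrow$ (2), I would apply \cite[Lemma 3.4]{kos} to the module $\syz^nM$, whose nonfree locus lies in $\v(I)$ by hypothesis, to obtain an integer $e>0$ such that each $y_j^e$ annihilates $\Ext_R^{>0}(\syz^nM,\mod R)$. The dimension shift recalled above transports this to $y_j^e\Ext_R^{>n}(M,\mod R)=0$ for every $j$. Setting $t=m(e-1)+1$, a pigeonhole argument on exponents forces $I^t\subseteq(y_1^e,\dots,y_m^e)$, so $I^t\Ext_R^{>n}(M,\mod R)=0$, as required.

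The only point deserving care is the $R$-equivariance of the dimension-shifting isomorphism, so that an element killing $\Ext^{>0}(\syz^nM,-)$ really kills $\Ext^{>n}(M,-)$; this is routine from the long exact sequence of $\Ext$. Everything else is a formal unpacking of \cite[Lemma 3.4]{kos}, which already encapsulates the hard content, so I do not anticipate any serious obstacle.
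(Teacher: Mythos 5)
Your proof is correct and follows essentially the same strategy as the paper. For (1) $\Rightarrow$ (2), both you and the authors invoke \cite[Lemma 3.4]{kos} on $\syz^nM$ and transport the annihilator through the degree shift $\Ext_R^{>0}(\syz^nM,-)\cong\Ext_R^{>n}(M,-)$; you spell out the pigeonhole step $I^{m(e-1)+1}\subseteq(y_1^e,\dots,y_m^e)$, which the paper leaves implicit. For (2) $\Rightarrow$ (1), the paper observes directly that $I^t$ annihilating $\Ext_R^1(\syz^nM,\syz^{n+1}M)$ forces $\nf(\syz^nM)=\supp\Ext_R^1(\syz^nM,\syz^{n+1}M)\subseteq\v(I)$, whereas you localize at a prime $\p\notin\v(I)$, note an element of $I$ becomes a unit killing $\Ext_{R_\p}^{>n}(M_\p,-)$, and conclude $(\syz^nM)_\p$ is free. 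These are two phrasings of the same fact (a module is annihilated by a power of $I$ iff its support lies in $\v(I)$), so the arguments are not genuinely different, just differently packaged.
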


\begin{proof}
(1) $\Rightarrow$ (2):
It follows from \cite[Lemma 3.4]{kos} that $I^t\Ext_R^{>0}(\syz^nM,\mod R)=0$ for some $t>0$.
Hence $I^t\Ext_R^{>n}(M,\mod R)=0$.

(2) $\Rightarrow$ (1):
Setting $N=\syz^nM$, we have $I^t\Ext_R^1(N,\syz N)=0$.
Therefore $\v(I)$ contains the support of the $R$-module $\Ext_R^1(N,\syz N)$, which coincides with $\nf(N)$.
\end{proof}

For a prime ideal $\p$ of $R$ we denote by $\X_\p$ the subcategory of $\mod R_\p$ consisting of modules of the form $X_\p$ with $X\in\X$.

\begin{theorem}\label{1dim}
Let $R$ be a local ring, and let $r\ge0$ be an integer.
Consider the following four conditions.
\begin{enumerate}[\rm(1)]
\item
There exist an ideal $I$ of $R$ with $\dim R/I\le r$ and an integer $n\ge0$ such that for each $R$-module $M$ there is an integer $t>0$ with $I^t\Ext_R^{>n}(M,\mod R)=0$.
\item
There exist a subcategory $\X$ of $\mod R$ with $\dim X\le r$ for all $X\in\X$ and an integer $n\ge0$ such that $\syz^n(\mod R)\subseteq{\left|\bigoplus_{i=0}^n\ext\syz^i\X\right|}_{n+1}$.
\item
There exist a subcategory $\X$ of $\mod R$ with $\dim X\le r$ for all $X\in\X$ and an integer $n\ge0$ such that $\syz^n(\mod R)\subseteq\thick(\X\cup\{R\})$.
\item
One has $\dim\sing R\le r$.
\end{enumerate}
Then the implications {\rm(1)} $\Rightarrow$ {\rm(2)} $\Rightarrow$ {\rm(3)} $\Rightarrow$ {\rm(4)} hold.
If $\sing R$ is closed, the four conditions are equivalent.
\end{theorem}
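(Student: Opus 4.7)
The plan is to establish the three implications $(1) \Rightarrow (2) \Rightarrow (3) \Rightarrow (4)$ in general, and then prove $(4) \Rightarrow (1)$ using the closedness of $\sing R$. The main tools are Theorem \ref{mthm}(2) and Lemma \ref{nfis}; the latter lets me freely translate between Ext-annihilation statements and containment in $\nf^{-1}(\v(I))$.

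For $(1) \Rightarrow (2)$, Lemma \ref{nfis} immediately yields $\syz^n(\mod R) \subseteq \nf^{-1}(\v(I))$. Choosing a system of $s$ generators for $I$ and applying Theorem \ref{mthm}(2) identifies $\nf^{-1}(\v(I))$ with $\left|\bigoplus_{i=0}^{s}\ext\syz^i\{R/\p\}_{\p\in\v(I)}\right|_{s+1}$. Setting $\X=\{R/\p:\p\in\v(I)\}$ forces $\dim X\le\dim R/I\le r$ for every $X\in\X$, and replacing $n$ by $N=\max(n,s)$ (which is harmless, since deeper syzygies remain $n$-th syzygies and balls of larger radius are larger) produces an index for which condition (2) holds with the same $\X$.

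For $(2) \Rightarrow (3)$, I would check that each ingredient of the ball $\left|\bigoplus_{i=0}^{n}\ext\syz^i\X\right|_{n+1}$ stays inside $\thick(\X\cup\{R\})$: the syzygies of any object in $\X$ are constructed from $\X$ and $R$ via the short exact sequences that split a free resolution, and extensions, direct sums, direct summands, and short exact sequences all preserve a thick subcategory. For $(3) \Rightarrow (4)$, suppose for contradiction that some $\p\in\sing R$ satisfies $\dim R/\p>r$. Since $\dim X\le r$ for every $X\in\X$, no such $\p$ can lie in $\supp X$, so $X_\p=0$ for every $X\in\X$. The subcategory $\{M\in\mod R:M_\p\in\thick(\X_\p\cup\{R_\p\})\}$ is readily seen to be thick and to contain $\X\cup\{R\}$, so localization interchanges with the thick closure. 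It follows that $M_\p$ is a free $R_\p$-module for every $M\in\syz^n(\mod R)$. Lifting any finitely generated $R_\p$-module $N$ to some $M'\in\mod R$ with $M'_\p=N$ and using that syzygies commute with localization up to free summands shows that $\syz^n_{R_\p}N$ is stably free, hence free, over the local ring $R_\p$. Therefore $R_\p$ has finite global dimension and is regular, contradicting $\p\in\sing R$.

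For $(4) \Rightarrow (1)$ under the hypothesis that $\sing R$ is closed, let $I$ be a radical ideal defining $\sing R$, so that $\v(I)=\sing R$ and $\dim R/I=\dim\sing R\le r$. Take $n=\dim R$: for any $\p\notin\sing R$ the local ring $R_\p$ is regular of dimension at most $n$, and therefore $(\syz^n_R M)_\p$ is stably free, hence free, for every $M\in\mod R$. This gives $\nf(\syz^n M)\subseteq\sing R=\v(I)$, and Lemma \ref{nfis} then supplies, for each $M$, an integer $t>0$ with $I^t\Ext_R^{>n}(M,\mod R)=0$. The main technical obstacle is the exchange of localization and thick closure in $(3)\Rightarrow(4)$; that step is handled once and for all by the standard thick-subcategory argument just indicated, after which the local reduction to the Auslander--Buchsbaum--Serre characterization of regularity is immediate.
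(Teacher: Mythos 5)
Your proof follows essentially the same route as the paper's: Lemma \ref{nfis} plus Theorem \ref{mthm}(2) for $(1)\Rightarrow(2)$, the straightforward containment for $(2)\Rightarrow(3)$, localization plus the fact that $\X_\p=0$ for $(3)\Rightarrow(4)$, and the bound $\operatorname{gl.dim}R_\p\le\dim R$ for regular local rings for $(4)\Rightarrow(1)$. Your treatment of $(1)\Rightarrow(2)$ is actually a touch more careful than the paper's, since you explicitly reconcile the Ext-vanishing level with the number of generators of $I$ by passing to $N=\max(n,s)$.

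One small slip in $(3)\Rightarrow(4)$: since $\X_\p=0$, the thick subcategory $\thick(\X_\p\cup\{R_\p\})$ equals $\thick(\{R_\p\})$, which is the subcategory of $R_\p$-modules of \emph{finite projective dimension}, not the subcategory of free modules. So the correct intermediate claim is that $M_\p$ has finite projective dimension for every $M\in\syz^n(\mod R)$, and consequently $\syz^n_{R_\p}N$ has finite projective dimension (not ``is stably free, hence free''). This suffices: every finitely generated $R_\p$-module then has finite projective dimension, so $R_\p$ is regular by Auslander--Buchsbaum--Serre. The conclusion and overall structure of your argument are unaffected, but the phrasing should be corrected.
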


\begin{proof}
(1) $\Rightarrow$ (2):
Lemma \ref{nfis} implies that $\syz^n(\mod R)$ is contained in $\nf^{-1}(\v(I))$.
Putting $\X=\{R/\p\}_{\p\in\v(I)}$, we see from the second assertion of Theorem \ref{mthm} that $\nf^{-1}(\v(I))={\left|\bigoplus_{i=0}^n\ext\syz^i\X\right|}_{n+1}$.
For each $\p\in\v(I)$ we have $\dim R/\p\le\dim R/I\le r$.

(2) $\Rightarrow$ (3):
It is straightforward that ${\left|\bigoplus_{i=0}^n\ext\syz^i\X\right|}_{n+1}$ is contained in $\thick(\X\cup\{R\})$.

(3) $\Rightarrow$ (4):
Take a prime ideal $\p$ of $R$ with $\dim R/\p>r$.
Localization at $\p$ shows that $\syz^n(\mod R_\p)$ is contained in $\thick(\X_\p\cup\{R_\p\})$,
which coincides with the subcategory of $\mod R_\p$ consisting of modules of finite projective dimension since $\X_\p=0$.
Therefore $R_\p$ is regular.
This shows that $\sing R$ has dimension at most $r$.

Now assume that $\sing R$ is closed, and let us show that (4) implies (1).
There is an ideal $I$ of $R$ with $\sing R=\v(I)$.
As $\dim\sing R\le r$, we have $\dim R/I\le r$.
Let $\p$ be a prime ideal in $\nf(\syz^d(\mod R))$, where $d=\dim R$.
Then $\syz^dM_\p$ is not $R_\p$-free for some $R$-module $M$.
Hence $R_\p$ is not regular, that is, $\p$ is in $\sing R$.
Thus $\syz^d(\mod R)$ is contained in $\nf^{-1}(\v(I))$, and Lemma \ref{nfis} completes the proof.
\end{proof}

\section{extension-closed subcategories and annihilation of cohomology}
In this section we will obtain a structure theorem on modules over a local ring $(R,\m)$ that
are locally free on the punctured spectrum.
Using this result, we study extension closures of syzygies of the
residue field and investigate the relationships between cohomology
annihilators of $R$ and its $\m$-adic completion.

We denote by $\fl R$ the subcategory of $\mod R$ consisting of $R$-modules of
finite length and by $\mod_0R$ the subcategory of $\mod R$ consisting of
$R$-modules that are locally free on the punctured spectrum of $R$.

The following result, which is a consequence of the second assertion of Theorem \ref{mthm},
concerns the structure of modules which are locally free on the punctured spectrum.
We will observe that this result plays a key role in the proofs of other results in this section.

\begin{theorem}\label{tdthm}
Let $(R,\m,k)$ be a local ring of dimension $d$.
Let $M$ be an $R$-module that is locally free on the punctured spectrum of $R$.
Then $M$ belongs to ${|\bigcup_{i=t}^d\syz^i\fl R|}_{d-t+1}$, where $t=\depth M$.
In particular, $M$ is in $\ext(\bigoplus_{i=t}^d\syz^ik)$.
\end{theorem}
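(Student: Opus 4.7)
The plan is to apply Theorem \ref{mthm}(1) to $M$ with the ideal generated by a suitably large power of a system of parameters of $R$, and then to exploit the vanishing of high Koszul homology forced by $\depth M = t$ in order to collapse the resulting filtration from length $d+1$ down to the desired length $d - t + 1$. First I would fix a system of parameters $\xx = x_1, \dots, x_d$ of $R$. Since $M$ is locally free on the punctured spectrum, the module $\Ext_R^1(M, \syz_R M)$ is supported only at $\m$, so is of finite length and is annihilated by a power of $\m$; I would choose $e \gg 0$ so that the ideal $I := (x_1^e, \dots, x_d^e)$ annihilates $\Ext_R^1(M, \syz_R M)$. Then $I$ is $\m$-primary and the hypothesis of Theorem \ref{mthm}(1) is satisfied.

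Applying that proof with the sequence $\xx^e$ produces (with $H_i := \H_i(\xx^e, M)$ and $E_0 := H_0$) exact sequences
$$
0 \to H_i \to E_i \to \syz_R E_{i-1} \to 0 \qquad (1 \le i \le d),
$$
with $M$ a direct summand of $E_d$; each $H_i$ has finite length since it is annihilated by the $\m$-primary ideal $I$. The key additional input is the standard identification of Koszul depth with ordinary depth for an $\m$-primary generating ideal, which gives $\H_i(\xx^e, M) = 0$ for all $i > d - t$. Iterating the syzygy functor on the sequences above, as in the proof of Theorem \ref{mthm}(1), realizes $E_d$ as a module filtered by successive extensions of $\syz_R^{d-i} H_i$ for $i = 0, 1, \dots, d$. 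By the Koszul vanishing, the top $t$ subquotients (those indexed by $i > d - t$) are zero, so the effective filtration has length $d - t + 1$ with surviving subquotients of the form $\syz_R^j L$, $L \in \fl R$, $j \in \{t, \dots, d\}$. Passing to the direct summand $M$ places $M$ in ${|\bigcup_{i=t}^d \syz_R^i \fl R|}_{d-t+1}$, as required.

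For the ``in particular'' clause, I would use that every finite length module is an iterated extension of copies of $k$, so by the horseshoe lemma each $\syz_R^i L$ with $L \in \fl R$ lies in the subcategory generated under extensions and summands by $\syz_R^i k$ and the free module $R$; and $R$ itself sits in $\ext\{\syz_R^j k, \syz_R^{j+1} k\}$ via a syzygy-shift short exact sequence $0 \to \syz_R^{j+1} k \to R^{\beta_j} \to \syz_R^j k \to 0$ coming from a free resolution of $k$ (the degenerate case $t = d$ being handled directly by tracking the horseshoe resolution of $L$). Assembling these observations over $j \in [t, d]$ yields $M \in \ext(\bigoplus_{i=t}^d \syz_R^i k)$. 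The most delicate step, I expect, will be the clean translation of the depth hypothesis into Koszul-homology vanishing together with the accounting of free summands during the horseshoe arguments; once those are in place, the rest is direct assembly from Theorem \ref{mthm}(1).
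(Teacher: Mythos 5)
Your proof is correct, and it takes a genuinely different route from the paper's. The paper first applies the completed statement of Theorem~\ref{mthm}(2) to get the crude bound $M \in {|\bigcup_{i=0}^d\syz^i\fl R|}_{d+1}$, then improves the exponent by choosing an $M$-regular subsystem of parameters $\yy$ of length $t$, invoking \cite[Corollary 3.2(1)]{kos} to realize $M$ as a direct summand of $\syz_R^t(M/\yy M)$, applying the crude bound over the $(d-t)$-dimensional ring $R/\yy R$, and pulling back via $\syz_R^t$ (which requires verifying $\syz_R^t\syz_{R/\yy R}^i L \cong \syz_R^{t+i}L\oplus(\text{free})$ and then removing the stray $R$ by showing $R\in\add(\syz^d\fl R)$). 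You instead peer inside the proof of Theorem~\ref{mthm}(1) and observe that the depth sensitivity of the Koszul complex kills the top $t$ Koszul homologies $\H_i(\xx^e,M)$ for $i>d-t$, so the filtration built from the exact sequences $0\to H_i\to E_i\to\syz_R E_{i-1}\to 0$ has only $d-t+1$ nonzero layers, each lying in $\syz^j\fl R$ for $j\in\{t,\dots,d\}$. This avoids the change of ring entirely and is arguably more direct; the trade-off is that it relies on the internal structure of Theorem~\ref{mthm}(1)'s proof rather than on its statement. A small simplification is available in your treatment of the ``in particular'' clause: for each $j$ and each $L\in\fl R$ of length $\ell$, iterating the horseshoe lemma along a composition series of $L$ and placing the free summands on the middle term of each short exact sequence shows directly that $\syz^j L$ is a direct summand of a module filtered by $\ell$ copies of $\syz^j k$, so $\syz^j\fl R\subseteq\ext(\syz^jk)$ with no need to bring $R$ into the generating set or to treat $t=d$ separately.
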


\begin{proof}
Take any system of parameters $\xx=x_1,\dots,x_d$ of $R$.
Then we have $\nf(M)\subseteq\{\m\}\subseteq\v(\xx)$, so $M$ is in $\nf^{-1}(\v(\xx))$.
It follows from the second assertion of Theorem \ref{mthm} that
\begin{equation}\label{in}
M\in{\left|\bigcup_{i=0}^d\syz^i\fl R\right|}_{d+1}.
\end{equation}

We can choose a sequence $\yy=y_1,\dots,y_t$ of elements in $R$ that is
both an $M$-sequence and a subsystem of parameters of $R$.
As $\nf(M)\subseteq\{\m\}\subseteq\v(\yy)$, applying \cite[Lemma 3.4]{kos}
again, we obtain $\yy^e\Ext_R^{>0}(M,\mod R)=0$ for some $e>0$.
Replacing $\yy$ with $\yy^e$, we may assume $\yy\Ext_R^{>0}(M,\mod R)=0$.
By \cite[Corollary 3.2(1)]{kos} the module $M$ is isomorphic to a
direct summand of $\syz_R^t(M/\yy M)$.
In view of the containment \eqref{in} for the $R/\yy R$-module $M/\yy M$,
it is seen that $M/\yy M$ is in ${|\bigoplus_{i=0}^{d-t}\syz_{R/\yy R}^i\fl(R/\yy R)|}_{d-t+1}$,
 where $|\ |$ is taken in $\mod R/\yy R$.
Appling the $t$-th syzygy functor over $R$ yields
$$
M\in{\left|\bigcup_{i=0}^{d-t}\syz_R^t\syz_{R/\yy R}^i\fl(R/\yy R)\right|}_{d-t+1}.
$$

Let $L$ be a module in $\fl(R/\yy R)$, and take an exact sequence
$$
0 \to \syz_{R/\yy R}^iL \to P_{i-1} \to \cdots \to P_0 \to L \to 0
$$
of $R/\yy R$-modules with $P_0,\dots,P_{i-1}$ free.
Appling the $t$-th syzygy functor over $R$ to this, we get an exact sequence
$$
0 \to \syz_R^t\syz_{R/\yy R}^iL \to \syz_R^tP_{i-1} \to \cdots \to \syz_R^tP_0 \to \syz_R^tL \to 0.
$$
Note that $\syz_R^tP_0,\dots,\syz_R^tP_{i-1}$ are free $R$-modules.
It follows that $\syz_R^t\syz_{R/\yy R}^iL$ is isomorphic to $\syz_R^{t+i}L\oplus F$ for some free $R$-module $F$.
As $L$ is also of finite length over $R$, we obtain
$$
M\in{\left|\bigcup_{i=0}^{d-t}\syz_R^{t+i}\fl R\cup\{R\}\right|}_{d-t+1}={\left|\bigcup_{i=t}^d\syz_R^i\fl R\cup\{R\}\right|}_{d-t+1}.
$$

We claim that $R$ belongs to $\add(\syz^d\fl R)$.
Indeed, if $d=0$, then $R$ has finite length, and belongs to $\add(\syz^d\fl R)=\mod R$.
If $d\ge1$, then taking any $R$-module $K$ of finite length, we see from Remark \ref{rem} that $\syz^dK\oplus R\in\syz^d\fl R$, and hence $R\in\add(\syz^d\fl R)$.

Consequently, $M$ belongs to ${|\bigcup_{i=t}^d\syz_R^i\fl R|}_{d-t+1}$,
and the proof of the first assertion of the theorem is completed.
The second assertion follows from the first assertion and the fact that $\fl R=\ext(k)$.
\end{proof}

\begin{rem}
In the case where $R$ is Cohen--Macaulay, the last assertion in
Theorem \ref{tdthm} is nothing but \cite[Theorem 2.4]{stcm}.
Theorem \ref{tdthm} not only removes the Cohen--Macaulay assumption
from \cite[Theorem 2.4]{stcm} but also gives more precise structure of the module $M$.
\end{rem}

We record here an immediate consequence of Theorem \ref{tdthm}.

\begin{cor}\label{strmod0}
Let $(R,\m,k)$ be a local ring of dimension $d$.
Then
$$
\mod_0R={\left|\bigcup_{i=0}^d\syz^i\fl R\right|}_{d+1}=\ext\left(\bigoplus_{i=0}^d\syz^ik\right).
$$
\end{cor}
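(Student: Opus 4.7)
My plan is to prove the two stated equalities by establishing the cyclic chain of inclusions
\[
\mod_0R\ \subseteq\ {\textstyle\left|\bigcup_{i=0}^d\syz^i\fl R\right|}_{d+1}\ \subseteq\ \ext\bigl({\textstyle\bigoplus_{i=0}^d\syz^ik}\bigr)\ \subseteq\ \mod_0R.
\]
The first inclusion is almost immediate from Theorem \ref{tdthm}: for any $M\in\mod_0R$, setting $t=\depth M\ge0$ yields $M\in{|\bigcup_{i=t}^d\syz^i\fl R|}_{d-t+1}$, and since the ball operation is monotone in both its center and its radius, the right-hand side is contained in the ball in the statement. The third inclusion is a routine verification: $\mod_0R$ is closed under direct summands (trivial) and under extensions (over the local ring $R_\p$ a short exact sequence of free modules splits), while each $\syz^ik$ lies in $\mod_0R$ because $k_\p=0$ for every nonmaximal prime $\p$, so $(\syz^ik)_\p$ is, up to free summands, an $R_\p$-syzygy of $0$, hence $R_\p$-free.

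The core of the argument is the middle inclusion. Since $\ext\Y$ is closed under direct summands and extensions (and direct sums, which are extensions), a straightforward induction on $r$ shows that ${|\Y|}_r\subseteq\ext\Y$ for every subcategory $\Y$. It therefore suffices to prove that
\[
\textstyle\bigcup_{i=0}^d\syz^i\fl R\ \subseteq\ \X:=\ext\bigl(\bigoplus_{i=0}^d\syz^ik\bigr).
\]
Writing $\fl R=\ext(k)$, I will fix $0\le i\le d$ and induct on the construction of $L\in\ext(k)$ from $k$ by extensions and direct summands. The base case $L=k$ gives $\syz^iL=\syz^ik\in\X$. For the inductive step corresponding to an extension $0\to L_1\to L\to L_2\to 0$, the horseshoe lemma supplies an exact sequence
\[
0\to\syz^iL_1\to\syz^iL\oplus F\to\syz^iL_2\to0
\]
with $F$ a free $R$-module; if the two outer terms lie in $\X$, then so does the middle term by closure under extensions, and hence also $\syz^iL$ by closure under direct summands. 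The direct-summand step is handled similarly, using that compatible choices of projective resolutions give $\syz^i(L\oplus L')=\syz^iL\oplus\syz^iL'$.

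The one point that requires care is that the free summand $F$ in the horseshoe sequence must itself lie in $\X$ for the closure-under-extensions argument to apply. This reduces to showing $R\in\X$, which I deduce by applying Theorem \ref{tdthm} a second time to $M=R\in\mod_0R$: the ``in particular'' clause gives $R\in\ext(\bigoplus_{i=\depth R}^d\syz^ik)\subseteq\X$. Because $\X$ is closed under extensions, it then contains every finitely generated free module, completing the induction. This bookkeeping of the stray free summands produced by the horseshoe lemma is the main (and essentially the only) obstacle in the argument; the rest is a direct unpacking of the definitions and a second invocation of Theorem \ref{tdthm}.
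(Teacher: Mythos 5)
Your proof is correct and follows essentially the same cyclic chain of inclusions as the paper's (very terse) argument, filling in the details of the middle inclusion that the paper leaves implicit in its appeal to Theorem \ref{tdthm}. One small remark: the worry about the free summand $F$ needing to lie in $\X$ is actually superfluous---closure under extensions already puts $\syz^iL\oplus F$ in $\X$ from the two outer terms, and closure under direct summands then gives $\syz^iL\in\X$ with no constraint on $F$---though your fix via $R\in\X$ is valid and harmless.
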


\begin{proof}
Theorem \ref{tdthm} implies $\mod_0R\subseteq{|\bigcup_{i=0}^d\syz^i\fl R|}_{d+1}\subseteq\ext(\bigoplus_{i=0}^d\syz^ik)$.
Since $\mod_0R$ contains the module $\bigoplus_{i=0}^d\syz^ik$ and is closed under direct summands and extensions, it also contains $\ext(\bigoplus_{i=0}^d\syz^ik)$.
\end{proof}

The following result is a consequence of Corollary \ref{strmod0}.
An analogous result is obtained in \cite[Theorem 3.2]{stcm} over a Cohen--Macaulay local ring.

\begin{cor}\label{comp0}
Let $R$ be a local ring.
For every $M\in\mod_0\widehat R$ there exists $N\in\mod_0R$ such that $M$ is a direct summand of $\widehat N$.
\end{cor}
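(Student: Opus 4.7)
The plan is to apply Corollary~\ref{strmod0} over the completion $\widehat{R}$ and then descend the resulting filtration of $M$ to one built from $R$-modules, exploiting the flatness of $\widehat{R}$ over $R$ together with the fact that $\Ext_R^1$ between modules locally free on the punctured spectrum has finite length.

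First I would set $G = \bigoplus_{i=0}^d \syz_R^i k \in \mod_0 R$ and $H = \bigoplus_{i=0}^d \syz_{\widehat{R}}^i k \in \mod_0 \widehat{R}$. Completing a free $R$-resolution of $k$ yields a free $\widehat{R}$-resolution, so each $\widehat{\syz_R^i k}$ coincides with $\syz_{\widehat{R}}^i k$ up to a free summand; consequently $H$ is a direct summand of $\widehat{G} \oplus \widehat{R}^s$ for some $s \ge 0$. By Corollary~\ref{strmod0} applied to $\widehat{R}$, the module $M$ is a direct summand of some $M'$ carrying a filtration $0 = M_0 \subseteq M_1 \subseteq \cdots \subseteq M_r = M'$ whose subquotients lie in $\add H$, and every module in $\add H$ is a direct summand of $\widehat{N_0}$ for a suitable $N_0 \in \mod_0 R$.

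The core of the argument is induction on $r$, showing that $M'$ itself is a direct summand of $\widehat{N}$ for some $N \in \mod_0 R$. For the inductive step, the induction hypothesis furnishes $A \in \mod_0 R$ together with a splitting $\widehat{A} = M_{r-1} \oplus A'$, and the base case provides $B \in \mod_0 R$ with $\widehat{B} = (M_r/M_{r-1}) \oplus B'$. Forming the direct sum of the given short exact sequence $0 \to M_{r-1} \to M_r \to M_r/M_{r-1} \to 0$ with the identity sequences on $A'$ and $B'$ yields an $\widehat{R}$-extension
$$ 0 \to \widehat{A} \to M_r \oplus A' \oplus B' \to \widehat{B} \to 0, $$
representing a class in $\Ext_{\widehat{R}}^1(\widehat{B}, \widehat{A})$. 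Since $A, B \in \mod_0 R$, the finitely generated $R$-module $\Ext_R^1(B, A)$ is supported only at $\m$ and hence of finite length, so it is canonically isomorphic to its completion; combined with the flat base change isomorphism $\Ext_{\widehat{R}}^1(\widehat{B}, \widehat{A}) \cong \widehat{R} \otimes_R \Ext_R^1(B, A)$, this gives $\Ext_{\widehat{R}}^1(\widehat{B}, \widehat{A}) \cong \Ext_R^1(B, A)$. Consequently the displayed $\widehat{R}$-extension is the completion of an $R$-extension $0 \to A \to N \to B \to 0$, so $\widehat{N} \cong M_r \oplus A' \oplus B'$. As $\mod_0 R$ is closed under extensions, $N \in \mod_0 R$, and $M$ is a direct summand of $M_r$, hence of $\widehat{N}$.

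The principal obstacle will be verifying that the base-change isomorphism on $\Ext^1$ is compatible with Yoneda's classification of extensions, so that a chosen class in $\Ext_R^1(B, A)$ does exhibit the displayed $\widehat{R}$-sequence as the completion of an honest $R$-extension; the rest reduces to careful bookkeeping with direct summands and exactness of completion.
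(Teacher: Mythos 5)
Your argument is correct, but it takes a different route than the paper. The paper's proof is a two-line citation: it observes $M\in\ext_{\widehat R}(\widehat G)$ by Corollary~\ref{strmod0} (with $G=\bigoplus_{i=0}^d\syz^i k\oplus R$) and then invokes \cite[Proposition~3.1]{stcm} as a black box to descend the filtration. You instead re-derive the descent from scratch, and the mechanism you use is exactly the right one: for $A,B\in\mod_0R$ the module $\Ext_R^1(B,A)$ is supported only at $\m$ and hence of finite length, so it is its own $\m$-adic completion; combined with flat base change $\Ext_{\widehat R}^1(\widehat B,\widehat A)\cong\widehat R\otimes_R\Ext_R^1(B,A)$, this makes the completion map $\Ext_R^1(B,A)\to\Ext_{\widehat R}^1(\widehat B,\widehat A)$ an isomorphism, and it is compatible with Yoneda classes in the usual way (the class of $0\to A\to E\to B\to 0$ maps to that of $0\to\widehat A\to\widehat E\to\widehat B\to 0$, exact because $\widehat R$ is $R$-flat). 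The bookkeeping — padding $M_{r-1}$ and $M_r/M_{r-1}$ to $\widehat A$ and $\widehat B$ by adding complementary summands with split sequences, and using that $\mod_0R=\nf^{-1}(\{\m\})$ is resolving and hence closed under extensions — is correct, as is the base case via $\syz_{\widehat R}^ik=\widehat{\syz_R^ik}$ up to free summands. Net comparison: the paper's proof is shorter but offloads the work to a cited result; yours is self-contained and makes transparent exactly which finiteness (the finite length of $\Ext^1$ within $\mod_0$) drives the descent of extensions along completion.
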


\begin{proof}
Set $G=\bigoplus_{i=0}^d\syz^ik\oplus R$.
The module $M$ is in $\ext_{\widehat R}(\widehat G)$ by Corollary \ref{strmod0}.
Using \cite[Proposition 3.1]{stcm}, we observe that there is an
$R$-module $N$ in $\ext_R(G)$ such that $M$ is a direct
summand of $\widehat N$, and $\ext_R(G)=\mod_0R$ by Corollary \ref{strmod0} again.
\end{proof}
The result below, which compares cohomology annihilators of a local ring $R$
and its completion, is an application of Corollary \ref{comp0}.

\begin{theorem}\label{cahat}
Let $R$ be a $d$-dimensional local ring.
Let $n\ge0$ be an integer.
\begin{enumerate}[\rm(1)]
\item
One has $\ca^n(\widehat R)\cap R\subseteq\ca^n(R)$.
\item
Suppose that $\widehat R$ is an isolated singularity.
Then $\ca^n(R)\subseteq\ca^{n+d}(\widehat R)\cap R$.
Hence
$$
\ca(R)=\ca(\widehat R)\cap R.
$$
In particular, $\ca(R)$ is $\m$-primary if and only if $\ca(\widehat R)$ is $\widehat\m$-primary.
\end{enumerate}
\end{theorem}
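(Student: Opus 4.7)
The plan is to handle (1) by faithful flatness of $\widehat R$ over $R$, and (2) by a dimension-shift argument that applies Corollary \ref{comp0} to reduce an arbitrary $\widehat R$-module to a completion of an $R$-module.

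For (1), fix $r \in \ca^n(\widehat R) \cap R$ and $M, N \in \mod R$. Since $\widehat R$ is flat over $R$ and $M$ is finitely presented, one has the natural isomorphism $\Ext_R^n(M,N) \otimes_R \widehat R \cong \Ext_{\widehat R}^n(\widehat M, \widehat N)$. The right side is annihilated by $r$, so the left side is too, and faithful flatness of $R\to\widehat R$ forces $r\cdot\Ext_R^n(M,N)=0$; hence $r \in \ca^n(R)$.

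For (2), fix $r \in \ca^n(R)$ and arbitrary $M, N \in \mod \widehat R$. Dimension-shifting gives $\Ext_{\widehat R}^{n+d}(M,N) \cong \Ext_{\widehat R}^n(\syz_{\widehat R}^d M, N)$. Because $\widehat R$ is a $d$-dimensional isolated singularity, $\widehat R_{\mathfrak q}$ is regular of dimension at most $d$ for every nonmaximal prime $\mathfrak q$, so every finitely generated $\widehat R_{\mathfrak q}$-module has projective dimension at most $d$; consequently $\syz_{\widehat R}^d M$ lies in $\mod_0 \widehat R$. Corollary \ref{comp0} then produces $N' \in \mod_0 R$ such that $\syz_{\widehat R}^d M$ is a direct summand of $\widehat{N'}$, which makes $\Ext_{\widehat R}^n(\syz_{\widehat R}^d M, N)$ a direct summand of $\Ext_{\widehat R}^n(\widehat{N'}, N)$. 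Taking a resolution $P_\bullet \to N'$ by finitely generated free $R$-modules, flatness of $\widehat R$ turns $\widehat R \otimes_R P_\bullet \to \widehat{N'}$ into a free resolution over $\widehat R$, and the canonical identification $\Hom_{\widehat R}(\widehat R \otimes_R P_i, N) \cong \Hom_R(P_i, N)$ gives $\Ext_{\widehat R}^n(\widehat{N'}, N) \cong \Ext_R^n(N', N)$, with $N$ on the right regarded merely as an $R$-module. Since $N'$ is finitely presented, writing $N$ as the directed union of its finitely generated $R$-submodules $N_\alpha$ gives $\Ext_R^n(N', N) = \varinjlim \Ext_R^n(N', N_\alpha)$, and each term on the right is killed by $r \in \ca^n(R)$. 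Thus $r$ annihilates $\Ext_{\widehat R}^{n+d}(M,N)$, proving $r \in \ca^{n+d}(\widehat R) \cap R$.

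For the remaining assertions, choose $n$ large enough that $\ca^n(R) = \ca(R)$ and $\ca^n(\widehat R) = \ca(\widehat R)$; combining (1) and (2) yields $\ca(R) = \ca(\widehat R) \cap R$. For the $\m$-primary equivalence, if $\m^k \subseteq \ca(R)$ then $\widehat\m^k = \m^k \widehat R \subseteq \ca(\widehat R)$, and conversely $\widehat\m^k \subseteq \ca(\widehat R)$ yields $\m^k = \widehat\m^k \cap R \subseteq \ca(\widehat R) \cap R = \ca(R)$. The essential step in the whole argument is the invocation of Corollary \ref{comp0}, which is what allows an arbitrary module in $\mod_0 \widehat R$ to be replaced by the completion of an $R$-module; everything else is routine homological algebra involving flatness, dimension shifts, and directed colimits.
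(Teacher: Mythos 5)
Your proof of (1) is essentially the paper's own: realize $\Ext_{\widehat R}^n(\widehat M,\widehat N)$ as the completion of $\Ext_R^n(M,N)$ and invoke faithful flatness (purity) of $R\to\widehat R$. For (2), the overall strategy---dimension shift by $d$ so as to land in $\mod_0\widehat R$, then apply Corollary~\ref{comp0} to replace by a summand of a completed $R$-module---coincides with the paper's, but the tactics diverge in a worthwhile way. The paper applies $\syz_{\widehat R}^d$ to \emph{both} arguments $X,Y$, invokes Corollary~\ref{comp0} twice to make both of $\syz^dX,\syz^dY$ summands of completed modules, deduces $a\Ext_{\widehat R}^{n+d}(X,\syz_{\widehat R}^dY)=0$, and then specializes $Y=\syz^n X$ and cites \cite[Lemma~2.13]{ua} to pass to an arbitrary second argument. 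You syzygy only the first variable, use Corollary~\ref{comp0} once, identify $\Ext_{\widehat R}^n(\widehat{N'},N)\cong\Ext_R^n(N',N)$ via tensor-hom adjunction along a finite free $R$-resolution of $N'$, and dispose of the fact that $N$ need not be finitely generated over $R$ by writing it as a filtered colimit of finitely generated $R$-submodules and observing that $\Ext_R^n(N',-)$ commutes with filtered colimits (since $N'$ has a resolution by finitely generated free modules). This is correct and trades the appeal to \cite[Lemma~2.13]{ua} for the filtered-colimit computation. One point to make explicit: the dimension-shift isomorphism $\Ext_{\widehat R}^{n+d}(M,N)\cong\Ext_{\widehat R}^n(\syz_{\widehat R}^dM,N)$ is valid only for $n\ge1$; the paper handles $n=0$ separately by noting $\ca^0(R)=0$, and you should record the same remark. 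Your deduction of the equality $\ca(R)=\ca(\widehat R)\cap R$ and of the $\m$-primary equivalence is routine and correct.
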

\begin{proof}
(1) Let $a\in\ca^n(\widehat R)\cap R$.
Then $a$ is an element of $R$ with $a\Ext_{\widehat R}^n(\mod\widehat R,\mod\widehat R)=0$.
Let $M,N$ be $R$-modules.
The element $a$ annihilates $\Ext_{\widehat R}^n(\widehat M,\widehat N)$, which is isomorphic to the completion of $\Ext_R^n(M,N)$.
Since the canonical map $R\to\widehat R$ is faithfully flat (and hence pure), $a$ annihilates $\Ext_R^n(M,N)$.
Therefore $a\in\ca^n(R)$.

(2) There is nothing to show for $n=0$, so let us assume $n>0$.
Take an element $a\in\ca^n(R)$.
Let $X,Y$ be $\widehat R$-modules.
Since $\widehat R$ is an isolated singularity, it is seen that $\syz_{\widehat R}^dX,\syz_{\widehat R}^dY$ are in $\mod_0\widehat R$.
Corollary \ref{comp0} implies that there exist $R$-modules $M,N\in\mod_0R$ such that
$\syz_{\widehat R}^dX,\syz_{\widehat R}^dY$ are direct summands of $\widehat M,\widehat N$ respectively.
The element $a$ annihilates $\Ext_R^n(M,N)$, and completion shows that it also annihilates $\Ext_{\widehat R}^n(\widehat M,\widehat N)$.
Hence $a$ annihilates $\Ext_{\widehat R}^n(\syz_{\widehat R}^dX,\syz_{\widehat R}^dY)$, which implies
$$
a\Ext_{\widehat R}^{n+d}(X,\syz_{\widehat R}^dY)=0
$$
as $n>0$.
Letting $Y=\syz^nX$, we get $a\Ext_{\widehat R}^{n+d}(X,\syz_{\widehat R}^{n+d}X)=0$, and
$a\Ext_{\widehat R}^{\geqslant(n+d)}(X,\mod\widehat R)=0$ by \cite[Lemma 2.13]{ua}.
Thus we obtain $a\in\ca^{n+d}(\widehat R)$.

Now we have proved $\ca^n(R)\subseteq\ca^{n+d}(\widehat R)\cap R$.
Combining this with (1) gives rise to the equality $\ca(R)=\ca(\widehat R)\cap R$.
The last assertion is straightforward from this.
\end{proof}

\begin{rem}
As we have mentioned in the introduction, rings whose cohomology annihilators are $\m$-primary,
 can be viewed as a generalization of those of finite CM-type.
So, it seems interesting to ask whether this result remains true,
if one replaces the condition that $R$ has finite CM-type with the condition that the cohomology annihilator of $R$ is $\m$-primary.
Theorem \ref{cahat} answers this question positively without the Cohen--Macaulay assumption of the base ring.
\end{rem}

As a direct consequence of the above theorem in conjunction
with Theorem \ref{5cond}, we include the following result.

\begin{cor}Let $(R, \m)$ be a Gorenstein local ring such that $\widehat{R}$ is an isolated singularity.
Then $\db(R)$ has finite dimension if and only if so does $\db(\widehat{R})$.
\end{cor}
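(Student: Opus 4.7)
The plan is to reduce finiteness of $\dim\db$ to $\m$-primariness of the cohomology annihilator via Theorem~\ref{5cond}, and then transfer the latter property between $R$ and $\widehat R$ via Theorem~\ref{cahat}. The argument is essentially formal once I verify the hypotheses needed to invoke these theorems on both sides.

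First I would note that since $R$ is Gorenstein, so is $\widehat R$; and since $\widehat R$ is an isolated singularity by assumption, so is $R$. For the latter: take a nonmaximal $\p\in\spec R$ and use faithful flatness of $R\to\widehat R$ to lift to some $\widehat\p\in\spec\widehat R$ with $\widehat\p\cap R=\p$. Because $\widehat\p\cap R=\p\neq\m$, the prime $\widehat\p$ is nonmaximal in $\widehat R$, so $\widehat R_{\widehat\p}$ is regular by hypothesis. Since $R_\p\to\widehat R_{\widehat\p}$ is a flat local homomorphism into a regular local ring, $R_\p$ is itself regular by the standard descent of regularity along flat local maps.

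Next I would apply the Gorenstein case of Theorem~\ref{5cond} to each of $R$ and $\widehat R$. The equivalence $(1)\Leftrightarrow(4)$ in that theorem, combined with the fact that both rings are isolated singularities, yields that $\db(R)$ has finite dimension if and only if $\ca(R)$ is $\m$-primary, and the analogous equivalence for $\widehat R$. Because $\widehat R$ is an isolated singularity, Theorem~\ref{cahat} supplies the remaining link: $\ca(R)$ is $\m$-primary if and only if $\ca(\widehat R)$ is $\widehat\m$-primary. Chaining these three equivalences together delivers the corollary.

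The only step carrying any real substance is the descent of the isolated singularity property from $\widehat R$ to $R$, which rests on the standard regularity descent for flat local maps; the rest is bookkeeping of the previously established equivalences, so I do not anticipate any serious obstacle.
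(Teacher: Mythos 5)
Your proof is correct and follows essentially the same route as the paper's: both apply the Gorenstein case of Theorem~\ref{5cond} (the equivalence (1) $\Leftrightarrow$ (4)) on each side, and bridge the two via Theorem~\ref{cahat}(2). You are merely more explicit than the paper in verifying the hypotheses --- that Gorensteinness ascends to $\widehat R$ and that the isolated-singularity property descends from $\widehat R$ to $R$ by flat descent of regularity --- which the paper dispatches with the phrase ``it is an elementary fact.''
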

\begin{proof}Assume that $\db(R)$ has finite dimension. Since $\widehat{R}$
is an isolated singularity, it is an elementary fact that the same is true for $R$.
So Theorem \ref{5cond} forces $\ca(R)$ to be $\m$-primary and then,
one may apply the above theorem and conclude that $\ca(\widehat{R})$
is $\widehat{\m}$-primary, as well. Now, another use of Theorem \ref{5cond}
finishes the proof.
\end{proof}

In the Henselian case, we have the following stronger result than Corollary \ref{comp0}.

\begin{prop}\label{twice}
Let $R$ be a Henselian local ring.
\begin{enumerate}[\rm(1)]
\item
If $M$ be an indecomposable $R$-module, then $\widehat M$ is an indecomposable $\widehat R$-module.
\item
For each $M\in\mod_0\widehat R$ there exists $N\in\mod_0R$ such that $M\cong\widehat N$.
\end{enumerate}
\end{prop}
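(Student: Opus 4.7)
The plan is to leverage the Krull--Schmidt property available over Henselian local rings, combined with Corollary \ref{comp0}.

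For part (1), I would use the canonical identification $\End_{\widehat R}(\widehat M)\cong\End_R(M)\otimes_R\widehat R$, which presents the right-hand side as the $\m$-adic completion of the module-finite $R$-algebra $\End_R(M)$. The hypotheses that $R$ is Henselian and $M$ is indecomposable force $\End_R(M)$ to be a (noncommutative) local ring, via the standard lifting-of-idempotents argument for module-finite algebras over a Henselian local ring: any idempotent in $\End_R(M)/\m\End_R(M)$ lifts, so the absence of nontrivial idempotents in $\End_R(M)$ means its residue algebra is a division algebra. The $\m$-adic completion of a (noncommutative) local ring is again local, so $\End_{\widehat R}(\widehat M)$ is local, which is exactly the statement that $\widehat M$ is indecomposable.

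For part (2), I would begin by applying Corollary \ref{comp0} to produce an $R$-module $N'\in\mod_0R$ with $M$ a direct summand of $\widehat{N'}$. Because $R$ is Henselian, $\mod R$ satisfies Krull--Schmidt, so we may write $N'=N_1\oplus\cdots\oplus N_k$ with each $N_i$ indecomposable. Completion yields $\widehat{N'}\cong\widehat{N_1}\oplus\cdots\oplus\widehat{N_k}$, and part (1) ensures that each $\widehat{N_i}$ is indecomposable in $\mod\widehat R$. Since $\widehat R$ is complete (hence Henselian), Krull--Schmidt also holds in $\mod\widehat R$; thus the direct summand $M$ of $\widehat{N'}$ must be isomorphic to a partial direct sum $\widehat{N_{i_1}}\oplus\cdots\oplus\widehat{N_{i_\ell}}$ of these indecomposables. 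Setting $N:=N_{i_1}\oplus\cdots\oplus N_{i_\ell}$ then gives $M\cong\widehat N$, and $N$ inherits membership in $\mod_0R$ from $N'$ since the property of being locally free on the punctured spectrum is closed under direct summands.

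The main obstacle is the statement in (1) that the endomorphism ring of an indecomposable module over a Henselian local ring is local, together with the stability of this property under $\m$-adic completion; once this standard fact is secured, part (2) amounts to matching indecomposable summands on either side of the completion functor via Krull--Schmidt.
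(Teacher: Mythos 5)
Your proposal follows essentially the same route as the paper: for (1) identify $\End_{\widehat R}(\widehat M)$ with the $\m$-adic completion of $E=\End_R(M)$, use the Henselian hypothesis to see $E$ is a noncommutative local ring, and conclude $\widehat E$ is local; for (2) apply Corollary~\ref{comp0}, decompose the resulting $N'$ into indecomposables via Krull--Schmidt over the Henselian ring $R$, upgrade to indecomposability of the $\widehat{N_i}$ by (1), and match summands by Krull--Schmidt over $\widehat R$. Part (2) is complete and correct.

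In part (1), however, you take as given that ``the $\m$-adic completion of a (noncommutative) local ring is again local.'' Stated at that level of generality this is not a theorem, and the precise point that needs proof is exactly the content that the paper works out. What one needs is the following: if $E$ is a module-finite $R$-algebra, $J=J(E)$ its Jacobson radical, then (i) $\m E\subseteq J$, so $E/J$ has finite $R$-length and $\widehat E/\widehat J\cong E/J$ is a division ring, and (ii) $\widehat J$ really is the Jacobson radical of $\widehat E$. Point (ii) is not automatic: a priori one only gets that $\widehat J$ is a maximal (two-sided) ideal. The paper closes this gap by observing that $J/\m E$ is nilpotent in the artinian ring $E/\m E\cong\widehat E/\m\widehat E$, so $\widehat J/\m\widehat E$ lies inside $J(\widehat E)/\m\widehat E$, and then maximality of $\widehat J$ forces $\widehat J=J(\widehat E)$. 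You should either supply this Jacobson-radical argument or cite it (the paper references \cite[Proposition 3.1]{LW2} and \cite[Lemma~1.7, Theorem~1.8]{lw}); as written, your step is a genuine gap because the asserted general fact is false without the module-finiteness and the control of the radical it gives.
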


\begin{proof}
(1) This result is essentially proved in \cite[Proposition 3.1]{LW2}.
For the convenience of the reader, we give a proof.
Let $E=\End_{R}(M)$ be the endomorphism ring of $M$, and let $J$ be the Jacobson radical of $E$.
Since $E$ is a module-finite $R$-algebra, it follows from \cite[Lemma 1.7]{lw} that $\m E\subseteq J$.
Therefore $E/J$ is an $R$-module of finite length, and so we obtain isomorphisms $\widehat{E}/{\widehat{J}}\cong {{E}/J}\otimes_{R}\widehat{R}\cong E/J$.
As $R$ is Henselian, according to \cite[Theorem 1.8]{lw}, $E/J$ is a division ring and consequently $\widehat{E}/{\widehat{J}}$ is a division ring as well, meaning that $\widehat{J}$ is a maximal ideal of $\widehat{E}$.
On the other hand, as $E/{\m E}$ is artinian, $J/{\m E}$ is a nilpotent ideal of $E/{\m E}$ and so the isomorphisms $\widehat{E}/\m\widehat{E}\cong E/\m E$ and $\widehat{J}/{\m\widehat{E}}\cong J/{\m E}$ yield that $\widehat{J}/{\m\widehat{E}}$ is also a nilpotent ideal of $\widehat{E}/{\m\widehat{E}}$.
Hence $\widehat{J}/{\m\widehat{E}}$ is contained in the Jacobson radical of $\widehat{E}/\m\widehat{E}$, which is equal to $J'/{\m\widehat{E}}$ with $J'$ the Jacobson radical of $\widehat{E}$.
Thus we obtain $\widehat{J}=J'$, ensuring that $\widehat{E}$ is a local ring, whence $\widehat{M}$ is an indecomposable $\widehat{R}$-module.

(2) We may assume that $M$ is an indecomposable $R$-module.
Corollary \ref{comp0} implies that there exists a module $X\in\mod_0R$ such that $M$ is a direct summand of $\widehat X$.
Let $X=\bigoplus_{i=1}^nX_i$ be an indecomposable decomposition.
Note that $X_i$ belongs to $\mod_0R$ for all $i$.
Since $R$ is Henselian, assertion (1) guarantees that each $\widehat R$-module $\widehat{X_i}$ is indecomposable.
We have $\widehat X\cong\bigoplus_{i=1}^n\widehat{X_i}$, and as $\widehat R$ is Henselian, $M\cong\widehat{X_t}$ for some $1\le t\le n$ by the Krull--Schmidt theorem.
\end{proof}

Recall that a commutative neotherian ring $R$ is said to be of {\em finite} (respectively, {\em countable}) {\em Cohen--Macaulay representation type} (CM-type, for short), if there exist only finitely (respectively, countably) many isomorphism classes of indecomposable maximal Cohen--Macaulay $R$-modules.

It was conjectured by Schreyer \cite[Conjecture 7.3]{S} that, a local ring $R$ is of finite CM-type if and only if the ($\m$-adic) completion of $R$ is of finite CM-type.
Examples, discovered by Leuschke and Wiegand \cite[Examples 2.1, 2.2]{LW}, disproved this conjecture.
However, several classes of rings do satisfy Schreyer's conjecture.
Namely, the conjecture was answered affirmatively by Wiegand \cite[Theorem 2.9(4)]{W} in the case where $R$ is a Cohen--Macaulay ring whose completion is an isolated singularity, and by Leuschke and Wiegand \cite[Main Theorem]{LW} in the case where $R$ is a Cohen--Macaulay excellent ring.
We should also point out that Schreyer's conjecture holds true for all one dimensional local rings; see \cite[Theorem 2.9(2)]{W}.
The next result guarantees the validity of Schreyer's conjecture in the case where
 $R$ is a Henselian local ring whose completion $\widehat R$ is an isolated singularity.

\begin{cor}\label{fcmth}
Let $(R, \m)$ be a $d$-dimensional local ring such that $\widehat{R}$ is an isolated singularity.
Then the following statements hold true.
\begin{enumerate}[\rm(1)]
\item
Suppose that $R$ is Henselian.
Then $R$ has finite (respectively, countable) $CM$-type if and only if so does $\widehat{R}$.
\item
Let $n\ge0$ be an integer.
If $\add\syz^n(\mod R)$ contains only finitely (respectively, countably) many nonisomorphic indecomposable modules, then so does $\add\syz^{n+d}(\mod \widehat{R})$.
\end{enumerate}
\end{cor}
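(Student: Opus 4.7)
The plan is to exploit two standing facts in conjunction with Proposition \ref{twice} and Corollary \ref{comp0}: depth and Krull dimension are preserved by faithfully flat completion, so $M$ is MCM over $R$ iff $\widehat M$ is MCM over $\widehat R$; and since $\widehat R$ is an isolated singularity, every MCM $\widehat R$-module lies in $\mod_0\widehat R$. For part (1) the Henselian hypothesis is essential, while part (2) uses only the weaker summand version Corollary \ref{comp0}.

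For part (1), I would set up a bijection between iso classes of indecomposable MCM $R$-modules and iso classes of indecomposable MCM $\widehat R$-modules implemented by the completion functor. Krull--Schmidt holds in $\mod R$ and in $\mod\widehat R$ (endomorphism rings are semiperfect in the Henselian case). Proposition \ref{twice}(1) shows that completion sends indecomposables to indecomposables, and Proposition \ref{twice}(2) combined with Krull--Schmidt shows that every indecomposable in $\mod_0\widehat R$ is the completion of an indecomposable in $\mod_0 R$. Injectivity of the completion map on iso classes requires a \emph{descent lemma}: for Henselian $R$ and f.g.\ modules $M,N$, $\widehat M\cong\widehat N$ implies $M\cong N$. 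I would prove this by identifying $\widehat{\Hom_R(M,N)}$ with $\Hom_{\widehat R}(\widehat M,\widehat N)$ and approximating an isomorphism $\widehat\alpha$ and its inverse $\widehat\beta$ by $\alpha_0\in\Hom_R(M,N)$, $\beta_0\in\Hom_R(N,M)$; then $\alpha_0\beta_0-\mathrm{id}_N\in\m\End_R(N)$, which lies in the Jacobson radical of the module-finite $R$-algebra $\End_R(N)$, so $\alpha_0\beta_0$ is a unit and $\alpha_0$ is an isomorphism. Granted this, both directions of (1) follow immediately: complete a full system of indecomposable MCM $R$-modules to obtain all indecomposable MCM $\widehat R$-modules (and conversely, using Proposition \ref{twice}(2) to descend).

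For part (2), no Henselian hypothesis is needed. Let $\{Y_\gamma\}_{\gamma\in\Gamma}$ enumerate the iso classes of indecomposables in $\add\syz^n(\mod R)$. Given an indecomposable $X\in\add\syz^{n+d}(\mod\widehat R)$, write $X$ as a direct summand of $\syz^{n+d}M$ for some $M\in\mod\widehat R$. Isolated singularity forces $\syz^d M\in\mod_0\widehat R$, so Corollary \ref{comp0} provides $N\in\mod_0 R$ with $\syz^d M$ a summand of $\widehat N$. Since completing a projective resolution of $N$ yields a projective resolution of $\widehat N$, taking $n$ further syzygies shows that $\syz^{n+d}M$ is a summand of $\widehat{\syz^n N}\oplus\widehat R^k$ for some $k\ge 0$. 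Decomposing $\syz^n N\in\syz^n(\mod R)$ as a finite direct sum of indecomposables drawn from $\{Y_\gamma\}$ and completing, Krull--Schmidt in $\mod\widehat R$ forces $X$ to be either $\widehat R$ or an indecomposable summand of some $\widehat{Y_\gamma}$; since each f.g.\ $\widehat R$-module has only finitely many indecomposable direct summands, the number of iso classes of such $X$ is finite (respectively, countable) whenever $\Gamma$ is.

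The principal obstacle is the descent lemma of part (1); the remainder is careful bookkeeping with Proposition \ref{twice}, Corollary \ref{comp0}, and the commutation of syzygies with completion up to free summands, where the only subtlety in (2) is that the extra free factor contributes just the single indecomposable $\widehat R$ and does not affect the cardinality count.
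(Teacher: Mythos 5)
Your proposal is correct and follows essentially the same route as the paper: Proposition~\ref{twice} and Corollary~\ref{comp0} are the engines, the isolated-singularity hypothesis places MCM $\widehat R$-modules in $\mod_0\widehat R$, and part (2) uses that $\widehat R$ is Henselian to bound the indecomposable summands of the finitely many $\widehat{X_i}$. The one genuine (minor) deviation is in part (1): you build an explicit bijection on isomorphism classes, which requires the extra descent lemma ($\widehat M\cong\widehat N\Rightarrow M\cong N$ over a Henselian $R$); the paper instead only needs completeness of the list $\{\widehat{X_1},\dots,\widehat{X_t}\}$ for the ascent, and disposes of the descent direction by citing the general faithfully-flat result of Wiegand, which requires neither Henselianness nor the isolated-singularity hypothesis. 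Your descent lemma is correct and standard, but more than the statement demands; otherwise the two arguments coincide, and your explicit tracking of free summands via $\widehat{\syz^n N}\oplus\widehat R^k$ in part (2) is a welcome clarification of a step the paper glosses over.
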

\begin{proof}
We only deal with the finite case; the statements for the countable case are similarly shown.

(1) The `if' part holds true regardless of the assumption that $\widehat{R}$ is an isolated singularity, thanks to the fact that $\widehat{R}$ is faithfully flat over $R$; see \cite[Corollary 1.6]{W}.

So we have only to prove the `only if' part.
Let $\{X_1,\dots,X_t\}$ be a complete list of representatives for the isomorphism classes of indecomposable maximal Cohen--Macaulay $R$-modules.
Since $\widehat{R}$ is an isolated singularity, we see from Proposition \ref{twice} that $\{\widehat{X_1},\dots,\widehat{X_t}\}$ is a complete list of representatives for the isomorphism classes of indecomposable maximal Cohen--Macaulay $\widehat{R}$-modules.

(2) Let $\{X_1,\dots,X_t\}$ be a complete list of representatives for the isomorphism classes of indecomposable $R$-modules in $\add\syz^n(\mod R)$.
Let $M$ be an indecomposable $\widehat R$-module in $\add\syz^{n+d}(\mod \widehat{R})$.
Then $M$ is a direct summand of $\syz_{\widehat R}^{n+d}N$ for some $\widehat R$-module $N$.
Since $\widehat R$ is an isolated singularity, $\syz_{\widehat R}^dN$ is locally free on the punctured spectrum of $\widehat R$.
Hence there exists an $R$-module $L$ such that $\syz_{\widehat R}^dN$ is a direct summand of $\widehat L$ by Corollary \ref{comp0}.
Therefore $M$ is a direct summand of $\syz^n_{\widehat R}\widehat L=\widehat{\syz_R^nL}$.
Setting $X=X_1\oplus\cdots\oplus X_t$, one has that $\syz_R^nL$ belongs to $\add(X)$, and $M$ is in $\add(\widehat X)$
Thus $\add\syz^{n+d}(\mod\widehat R)=\add(\widehat{X})$.
Since $\widehat R$ is Henselian, the assertion follows from this.
\end{proof}

\begin{rem}
The {\sl Auslander--Reiten conjecture \cite{ar}} claims that, over an artin algebra $\Lambda$, if $M$
is a finitely generated $\Lambda$-module such that $\Ext_{\Lambda}^{i>0}(M,M\oplus \Lambda)=0$,
then $M$ is projective. This long-standing conjecture, which is rooted in a conjecture of
Nakayama \cite{Na}, is known to be true
for several classes of algebras including, algebras of finite representation type
\cite{ar} and symmetric artin algebras with radical cube zero \cite{Ho}.
The Auslander--Reiten conjecture actually makes sense for any noetherian ring.
In particular, Auslander, Ding and Solberg in \cite{ads}, studied the following condition
on a commutative noetherian ring, not necessarily an artin algebra.

\begin{enumerate}
\item[\textsf{(ARC)}]
Let $R$ be a commutative noetherian ring and $M$ a finitely generated
$R$-module. If $\Ext_{R}^i(M, M\oplus R)=0$ for all $i>0$, then $M$ is projective.
\end{enumerate}

\noindent
There are already some results in the study of classes of commutative rings
satisfying \textsf{(ARC)}; see for instance \cite{a, bfs, ct, ch, hl}.
In the next result, by using the above corollary, we investigate ascent and descent of \textsf{(ARC)} between a local ring and its completion.
\end{rem}

\begin{cor}\label{arc}
Let $(R, \m)$ be a $d$-dimensional Henselian local ring such that $\widehat{R}$ is an isolated singularity.
Then $R$ satisfies \textsf {(ARC)} if and only if so does $\widehat{R}$.
\end{cor}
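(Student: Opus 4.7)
The plan is to handle the two implications separately. The backward implication ``$\widehat R$ satisfies \textsf{(ARC)} $\Rightarrow$ $R$ satisfies \textsf{(ARC)}'' is the easier one and does not even require the isolated singularity hypothesis. Given an $R$-module $N$ with $\Ext_R^{>0}(N,N\oplus R)=0$, faithful flatness of $R\to\widehat R$ yields $\Ext_{\widehat R}^{>0}(\widehat N,\widehat N\oplus\widehat R)=0$, so \textsf{(ARC)} for $\widehat R$ forces $\widehat N$ to be free, whence $N$ itself is free because $\widehat R$ is faithfully flat over $R$ and completion preserves minimal generators.

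The forward direction is the substantive one. Let $M$ be an $\widehat R$-module with $\Ext_{\widehat R}^{>0}(M,M\oplus\widehat R)=0$; the first and decisive step will be to establish $M\in\mod_0\widehat R$. For any nonmaximal prime $\p$ of $\widehat R$, the local ring $(\widehat R)_\p$ is regular by the isolated singularity hypothesis, so localizing gives $\Ext_{(\widehat R)_\p}^{>0}(M_\p,M_\p)=0$. Since a finitely generated module over a regular local ring has finite projective dimension, and since a minimal free resolution shows that $\Ext^n(M_\p,M_\p)\ne 0$ whenever $n=\mathrm{pd}\,M_\p\ge 1$, the module $M_\p$ must be free. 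Thus $M$ is locally free on the punctured spectrum.

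Once $M\in\mod_0\widehat R$ has been secured, Proposition \ref{twice}(2) --- where the Henselian hypothesis on $R$ is crucial --- produces $N\in\mod_0 R$ with $M\cong\widehat N$. The natural isomorphism $\widehat{\Ext_R^i(N,N\oplus R)}\cong\Ext_{\widehat R}^i(M,M\oplus\widehat R)$ together with faithful flatness then descends the vanishing to $\Ext_R^{>0}(N,N\oplus R)=0$, so \textsf{(ARC)} for $R$ makes $N$ free and $M\cong\widehat N$ is free as well. The main obstacle is the first step of the forward direction --- forcing $M$ into $\mod_0\widehat R$; without the isolated singularity hypothesis on $\widehat R$ one could not apply the descent provided by Proposition \ref{twice}(2), which is the essential bridge between $R$ and $\widehat R$.
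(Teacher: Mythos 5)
Your proof is correct, and the ascent direction takes a genuinely different, and in fact slicker, route than the paper's. To land $M$ in $\mod_0\widehat R$, the paper replaces $M$ by $\syz^dM$: it first observes that $\Ext^{>0}_{\widehat R}(M,\widehat R)=0$ forces ``$M$ free $\iff$ $\syz^nM$ free'' (the short exact sequences $0\to\syz^{i+1}M\to F_i\to\syz^iM\to 0$ split downward once the top syzygy is free), then notes that $\syz^dM$ is automatically locally free on the punctured spectrum because $\widehat R$ is an isolated singularity of dimension $d$. You instead apply the self-extension hypothesis directly: for a nonmaximal $\p$, the localization $(\widehat R)_\p$ is regular, so $M_\p$ has finite projective dimension, and the standard Nakayama argument on a minimal free resolution shows $\Ext^{\mathrm{pd}\,M_\p}_{(\widehat R)_\p}(M_\p,M_\p)\ne0$ whenever $\mathrm{pd}\,M_\p\ge1$ and $M_\p\ne0$; hence $M_\p$ is free and $M\in\mod_0\widehat R$ already. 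In other words you are using the fact that \textsf{(ARC)} holds over regular local rings and checking it pointwise on the punctured spectrum, whereas the paper uses a syzygy shift and the vanishing $\Ext^{>0}(M,\widehat R)=0$. Your route avoids the syzygy bookkeeping; the paper's route only needs the vanishing against $\widehat R$ at this stage and is therefore marginally more robust, but both immediately feed into the same essential bridge, Proposition \ref{twice}(2), and the descent of the Ext-vanishing via faithful flatness, so the remainder of the argument is identical. One small point worth making explicit in your write-up: when $M_\p=0$ the freeness of $M_\p$ is trivial, so the Nakayama argument only needs to be invoked when $M_\p\ne0$.
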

\begin{proof}
Since $\widehat{R}$ is faithfully flat over $R$, it is easy to verify that descent holds true
even without assuming that $R$ is Henselian or that $\widehat{R}$ is an isolated singularity.
So let us show the ascent.
To this end, assume that $M$ is an arbitrary $\widehat{R}$-module such that $\Ext_{\widehat{R}}^{>0}(M, M\oplus\widehat{R})=0$.
We then deduce $\Ext_{\widehat{R}}^{>0}(\syz^nM, \syz^nM\oplus\widehat{R})=0$ for each $n\ge0$.
Moreover, since $\Ext_{\widehat{R}}^{i>0}(M, \widehat{R})=0$, one concludes that $M$ is free if and only if so is $\syz^nM$.
Thus, replacing $M$ with $\syz^dM$, we may assume $M\in\mod_0{\widehat{R}}$, since $\widehat R$ is an isolated singularity.
By Proposition \ref{twice} we have $M\cong\widehat N$ for some $N\in\mod_0R$.
Hence $\Ext_{R}^i(N, N\oplus R)\otimes_{R}\widehat{R}\cong\Ext_{\widehat{R}}^i(\widehat{N}, \widehat{N}\oplus\widehat{R})\cong\Ext_{\widehat R}^i(M,M\oplus\widehat R)=0$ for all $i>0$, and therefore $\Ext_{R}^i(N, N\oplus R)=0$ for all $i>0$, as $\widehat R$ is a pure extension of $R$.
Our hypothesis yields that $N$ is a free $R$-module, so that $M=\widehat{N}$ is a free $\widehat{R}$-module, as desired.
\end{proof}

\begin{ac}
The authors thank Srikanth Iyengar for his valuable comments.
\end{ac}



\end{document}